\documentclass[12pt]{article}

\usepackage{graphicx}
\usepackage{amsmath}
\usepackage{amssymb}
\usepackage{latexsym}
\usepackage{subfigure}
\usepackage{crop}
\usepackage{algorithmic}
\usepackage{algorithm}
\usepackage{multirow}%\usepackage{algorithm,algorithmic}
\usepackage[section,subsection,subsubsection]{placeins}
\usepackage[colorlinks = true, pdfstartview = FitV, linkcolor = blue, citecolor = blue, urlcolor = blue]{hyperref}
\usepackage{enumerate}

\setlength{\topmargin}{-.50in}
\setlength{\leftmargin}{0.0in}
\setlength{\evensidemargin}{0.25in}
\setlength{\oddsidemargin}{0.25in}
\setlength{\textheight}{8.5in}
\setlength{\textwidth}{6.0in}

%\newcommand {\ourname} {{\bf OurName\ }}

                                       % Math-bold in subscript

                                       % Math-bold

\newcommand {\uu}  { {\bf u} }

\newcommand {\xx}  { {\bf x} }

\newcommand{\R}{{\rm I\!R}}

\newcommand\argmin[1]  {\underset{#1}{\operatorname{arg\ min}}}

\newcommand {\Ex} { {\mathbb E} }

\newcommand {\qq}  { {\bf q} }

\newcommand {\mm}  { {\bf m} }
\newcommand {\vv}  { {\bf v} }
\newcommand {\ww}  { {\bf w} }

\newcommand {\FF}  { {\bf F} }

\newcommand {\dd}  { {\bf d} }

\newcommand{\hf}{\frac12}

\renewcommand{\vec}[1]{\ensuremath{\mathbf{#1}}}

\renewcommand{\div}{\nabla\cdot\,}
\newcommand{\grad}{\ensuremath {\vec \nabla}}

         % cross product
        % dot product
%\newcommand\grad{\nabla}              % gradient symbol
  % intervals
             % matrix
            % point
            % vector

\newcommand{\add}[1]{\textcolor{black}{#1}}

\newtheorem{theorem}{Theorem}

\newtheorem{example}{Example}

\newenvironment{proof}[1][Proof]{\begin{trivlist}
\item[\hskip \labelsep {\bfseries #1}]}{\end{trivlist}}

\newcommand{\qed}{\nobreak \ifvmode \relax \else
      \ifdim\lastskip<1.5em \hskip-\lastskip
      \hskip1.5em plus0em minus0.5em \fi \nobreak
      \vrule height0.75em width0.5em depth0.25em\fi}

\sloppy

\pagestyle{myheadings}
\thispagestyle{plain}

\begin{document}

\title{Data completion and stochastic algorithms for PDE inversion problems 
with many measurements}

\author{Farbod Roosta-Khorasani, Kees van den Doel and Uri Ascher
\thanks{Dept. of Computer Science, University of British Columbia, Vancouver, Canada
{\tt farbod/kvdoel/ascher@cs.ubc.ca} .
This work was supported in part by NSERC Discovery Grant 84306.}}
%\author{Kees van den Doel and Uri Ascher}

%\begin{document}

\maketitle

\begin{abstract}

Inverse problems involving systems of partial differential equations (PDEs)
with many measurements or experiments
can be very expensive to solve numerically.
In a recent paper we examined 
dimensionality reduction methods, both stochastic
and deterministic, to reduce this computational burden,
assuming that all experiments share the same set of receivers.

In the present article we consider the more general and practically important case where receivers are not
shared across experiments. We propose a data completion approach to alleviate this problem.
This is done by means of an approximation using an appropriately restricted
gradient or Laplacian regularization, extending existing data
for each experiment to the union of all receiver locations. 
Results using the method of simultaneous sources (SS) with the completed data 
are then compared to those obtained by a more general 
but slower random subset (RS) method which requires no modifications.
\end{abstract}

%\begin{keywords} 
%stochastic algorithm, data completion, inverse problem, partial differential equation,
%many experiments, DC resistivity
%\end{keywords}
%
%\begin{AMS}
%65N21, 65C05
%\end{AMS}
%
%\pagestyle{myheadings}
%\thispagestyle{plain}
%\markboth{Data completion \& stochastic algorithms}{inverse problems with many experiments}

%%%%%%%%%%%%%%%%%%%%%%%%%%%%%%%%%%%%%%%%%%%%%%%%%%%%%%%%%%%%%%%%%%%%%%%%%%%%%

\section{Introduction}
\label{sec:int}

The reconstruction of distributed parameter functions, by fitting to measured data solution values
of partial differential equation (PDE) systems in which  they appear as material properties,
can be very expensive to carry out.
This is so especially in cases where there are many experiments, where just one evaluation of
the forward operator can involve hundreds and thousands of PDE solves.
And yet, there are several such problems of intense current interest
in which the use of many experiments
is crucial for obtaining credible reconstructions in practical situations
\cite{na,dmr,jima12,haasol,fichtner,hel,rnkkda,learhe,smvoz,pihakn,cin,bbp,doasha}.
Extensive theory (e.g.,~\cite{papauh,aspa,alve,jima12}) also suggests that many well-placed experiments
are often a practical must for obtaining credible reconstructions.
Thus, methods to alleviate the resulting computational burden are highly sought after.

%\cite{learhe,rodoas,hach12}

%Much recent attention has been given to applying stochastic approximations
%in order to speed up otherwise expensive computational processes.
To be more specific,
consider the problem of recovering a model $\mm \in \R^{l_m}$, representing a discretization
of a surface function $m(\xx)$ in 2D or 3D, from measurements
$\dd_i \in \R^{l}$, $i = 1, 2, \ldots , s$.\footnote{For notational
  simplicity we make the nonessential assumption that $l$ does not
  depend on the experiment $i$.} For each $i$, the data is predicted as a function
of $\mm$ by a forward operator $\FF_i$, and the goal is to find (or infer) $\mm = \mm^*$ such that 
the misfit function
\begin{eqnarray} 
\phi(\mm) =  \sum_{i=1}^s \| \FF_i(\mm) - \dd_i \|^2
\label{1.1}
\end{eqnarray}
is roughly at a level commensurate with the noise.\footnote{
Throughout this article we use the $\ell_2$ vector norm unless otherwise
specified.}
The forward operator
involves an approximate solution of a PDE system, which
we write in discretized form as
\begin{subequations}
\begin{eqnarray}
A(\mm) \uu_i = \qq_i, \quad i = 1, \ldots , s ,
\label{1.5a}
\end{eqnarray}
where $\uu_i \in \R^{l_u}$ is the $i$th field, $\qq_i \in \R^{l_u}$ is the $i$th source,
and $A$ is a square matrix discretizing the PDE plus appropriate side conditions. 
Furthermore, there are given projection matrices $P_i$ such that
\begin{eqnarray}
\FF_i(\mm) = P_i\uu_i = P_i A(\mm)^{-1} \qq_i \label{1.5b}
\end{eqnarray}
predicts the $i$th data set.
Thus, evaluating $\FF_i$ requires a PDE system solve, and evaluating the objective function $\phi(\mm)$
requires $s$ PDE system solves.
%Note that the notation \eqref{1.5a} reflects an assumption of linearity in $\uu$ but not in $\mm$.
\label{1.5}
\end{subequations}

For reducing the cost of evaluating \eqref{1.1}, stochastic approximations are natural.
Thus, introducing a random vector $\ww = (w_1, \ldots, w_s)^T$
from a probability distribution satisfying
\begin{eqnarray} 
\mathbb{E} ( \ww \ww^T ) = I 
\label{w_cond}
\end{eqnarray}
(with $\Ex$ denoting the expected value with respect to $\ww$ and $I$ the 
$s \times s$ identity matrix), 
%for some positive constant $\gamma$, 
we can write~\eqref{1.1} as
\begin{eqnarray} 
\phi(\mm) = \Ex \left( \|  \sum_{i=1}^s  w_i( \FF_i(\mm) - \dd_i) \|^2 \right) ,
\label{1.4}
\end{eqnarray}
and approximate the expected value by a few samples $\ww$~\cite{achlioptas}.
If, furthermore, 
the data sets in different experiments are measured at the same
locations, i.e., $P_i = P \ \forall i$, then
\begin{eqnarray} 
\sum_{i=1}^s w_i\FF_i = \sum_{i=1}^s w_iP_iA(\mm)^{-1}\qq_i = PA(\mm)^{-1} \big( \sum_{i=1}^s w_i\qq_i\big) ,
\label{1.6}
\end{eqnarray}
which can be computed with a single PDE solve per realization of the weight vector $\ww$,
so a very effective procedure for approximating the objective function $\phi(\mm)$ 
is obtained \cite{HaberChungHermann2010}.

Next, in an iterative process for reducing~\eqref{1.1} sufficiently,
consider approximating the expectation value at iteration $n$ 
by random sampling from a set of $s_n$ vectors $\ww$, with $s_n \leq s$ potentially satisfying $s_n \ll s$;
see, e.g.,~\cite{sdr,jlns,geisser}. Several recent papers have proposed methods to control the size $s_n$
\cite{doas3,frsc11,bcnn,rodoas}. Let us now concentrate on one such iteration $n$, for which a specialized
Gauss-Newton (GN) or L-BFGS method may be employed.
We can write \eqref{1.1} using the Frobenius norm $\| \cdot \|_F$ as
%Using the Frobenius norm, let us write the objective function~\eqref{1.1} with $P_i = P$
%in~\eqref{1.5} as
\begin{eqnarray} 
\phi(\mm) &=& \| F(\mm) - D \|_F^2 , \label{1.7} \\
& & F = \big[ \FF_1, \FF_2, \ldots , \FF_s\big] \in \R^{l \times s}, \;\;\; 
D = \big[ \dd_1, \dd_2, \ldots , \dd_s\big] \in \R^{l \times s}, \nonumber
\end{eqnarray}
and hence, an unbiased estimator of $\phi(\mm)$ in the $n$th iteration is
\begin{equation} 
\hat \phi(\mm, W) = \frac{1}{s_n} \| ( F(\mm) - D ) W \|_F^2 ,
\label{phiMW}
\end{equation}
where $W=W_n = \big[ \ww_1, \ww_2, \ldots , \ww_{s_n} \big]$ is an $s \times s_n$ matrix with $\ww_{j}$'s 
drawn from any distribution satisfying~\eqref{w_cond}. 
%Still assuming $P_i = P \ \forall i$, 
For the case where $P_i = P \ \forall i$, different methods of simultaneous sources (SS) 
are obtained by using different algorithms for this
{\em model and data reduction} process~\cite{avto,yori,roas}. 
In \cite{rodoas} we have discussed and compared three such methods:
(i) a Hutchinson random sampling, (ii) a Gaussian random sampling, and (iii) the deterministic truncated 
singular value decomposition (TSVD).
We have found that, upon applying these methods to the famous DC-resistivity problem,
their performance was roughly comparable (although for just estimating the misfit function by~\eqref{phiMW},
only the stochastic methods work well).

A fourth, random subset (RS) method was considered in \cite{rodoas,doas3},
where a random subset of the original
experiments is selected at each iteration $n$. 
This method does not require that the receivers be shared among different experiments.
However, its performance was found 
%in \cite{rodoas} 
to be generally worse than the methods of simultaneous
sources, roughly by a factor between $1$ and $4$, and on average about $2$.\footnote{
The relative efficiency factor further increases if a less conservative criterion is used
for algorithm termination, see Section~\ref{sec:results}.}
This brings us to the quest of the present article, namely, to seek methods for the general case where
$P_i$ does depend on $i$, which are as efficient as the simultaneous sources methods.
The tool employed for this is to ``fill in missing data'', thus
replacing $P_i$, for each $i$, by a common projection
matrix $P$ to the union of all receiver locations, $i = 1, \ldots , s$.

The prospect of such {\em data completion}, like that of casting a set of false teeth based on a few
genuine ones, is not necessarily appealing, but is often necessary for reasons of computational efficiency.
Moreover, applied mathematicians do a virtual data completion automatically when considering a 
Dirichlet-to-Neumann map, for
instance, because such maps assume knowledge of the field $u$ (see, e.g., \eqref{2.1} below)
or its normal derivative on the entire spatial domain boundary,
or at least on a partial but continuous segment of it. 
Such knowledge of noiseless data at uncountably many locations
is never the case in practice, where receivers are discretely located and some noise, 
including data measurement noise, is unavoidable.
On the other hand, it can be argued that any practical data completion must inherently destroy
some of the ``integrity'' of the statistical modeling underlying, for instance, 
the choice of iteration stopping criterion,
because the resulting ``generated noise'' at the false points is not statistically independent 
of the genuine ones where data was collected.

Indeed, the problem of proper data completion is far from being a trivial one, and its inherent
difficulties are often overlooked by practitioners.
In this article we consider this problem in the context of the DC-resistivity problem 
(Section~\ref{sec:detail}),
with the sources and receivers for each data set located at \add{segments of}
the boundary $\partial \Omega$ of the
domain on which the forward PDE is defined.
Our data completion approach is to approximate or interpolate the given data directly in 
smooth segments of the boundary, while taking advantage of 
prior knowledge as to how the fields $\uu_i$ must behave there.
\add{We emphasize that the sole purpose of our data completion algorithms is to allow the set of 
receivers to be shared among all experiments. 
This can be very different from traditional data completion efforts that have sought to obtain 
extended data throughout the physical domain's boundary or even in the entire physical domain. 
Our ``statistical crime'' with respect to noise independence is thus far smaller, although still existent.}

We have tested several regularized approximations on the set of examples of Section~\ref{sec:results}, 
including several DCT, wavelet and curvelet approximations (for which we had hoped to 
leverage the recent advances  in compressive sensing and sparse $\ell_1$ methods~\cite{elad}
%\cite{gkljskm12,mallat}) 
as well as straightforward piecewise linear data interpolation.
However, the latter is well-known not to be robust against noise, while the former methods are
not suitable in the present context, as they are not built to best take advantage of the
known solution properties. 
The methods which proved winners in the experimentation ultimately use a Tikhonov-type regularization 
in the context of our approximation,  penalizing the discretized $L_2$
integral norm of the gradient or Laplacian of the fields restricted to the boundary segment surface. 
They are further described and theoretically justified in Section~\ref{sec:datainterp}, providing a rare instance
where theory correctly predicts and closely justifies the best practical methods.
We believe that this approach applies to a more general class of PDE-based inverse problems.

In Section~\ref{sec:inverse} we describe the inverse problem and the algorithm
variants used for its solution.
Several aspects arise with the prospect of data completion: which data -- the original or the completed -- to use for
carrying out the iteration, which data for controlling the iterative process, what stopping criterion to use, and more.
These aspects are addressed in Section~\ref{sec:comp_alg}.
The resulting algorithm, based on Algorithm~2 of \cite{rodoas}, is given in Section~\ref{sec:alg}.
The specific EIT/DC resistivity inverse problem described in Section~\ref{sec:detail}
then leads to the data completion methods developed and proved in Section~\ref{sec:datainterp}.

In Section~\ref{sec:results} we apply the algorithm variants developed in the two previous sections
to solve test problems with different receiver locations. 
The purpose is to investigate whether the SS algorithms based on completed data 
achieve results
of similar quality at a cheaper price, as compared to the RS method applied to the original data.
Overall, very encouraging results are obtained even when the original data receiver sets are rather sparse. 
Conclusions are offered in Section~\ref{sec:conclusions}. 
 
%%%%%%%%%%%%%%%%%%%%%%%%%%%%%%%%%%%%%%%%%%%%%%%%%%%%%%%%%%%%%%%%%%%%%%%%%%%%%%%%%%%%%%%%%%%%%%%%

\section{Stochastic algorithms for solving the inverse problem}
\label{sec:inverse}

The first two subsections below apply more generally than the third subsection.
The latter settles on one application and leads naturally to
Section~\ref{sec:datainterp}.

Let us recall the acronyms for random subset (RS) and simultaneous sources (SS),
used repeatedly in this section. 

%%%%%%%%%%%%%%%%%%%%%%%%%%%%%%%%%%%%%%%%%%%%%%%%%%%%%%%%%%%%%%%

\subsection{Algorithm variants}
\label{sec:comp_alg}

%We use one of the algorithms in~\cite{rodoas} 
To compare the performance of our model recovery methods with completed data, $\tilde D$, 
against corresponding ones with the original data, $D$,
we use the framework of Algorithm~2 in~\cite{rodoas}. 
This algorithm consists of two stages within each GN iteration. The first stage produces
a stabilized GN iterate, for which we use data denoted by $\hat D$. The second involves assessment
of this iterate in terms of improvement and algorithm termination, using data $\bar D$.
This second stage consists of evaluations of \eqref{phiMW}, in addition to \eqref{1.7}.
We consider three variants:
\begin{enumerate}[(i)]
\item $\hat D = D, \,\, \bar D = D$;\label{alg1_data_i}
\item $\hat D = \tilde D, \,\, \bar D = \tilde D$;\label{alg1_data_ii}
\item $\hat D = \tilde D, \,\, \bar D = D$;\label{alg1_data_iii}
\end{enumerate}
Note that only the RS method can be used in variant~\eqref{alg1_data_i}, whereas any of the SS methods as well
as the RS method can be employed in variant~\eqref{alg1_data_ii}.
In variant~\eqref{alg1_data_iii} we can use a more accurate SS method for the stabilized GN stage and an RS method for
the convergence checking stage, with the potential advantage that the evaluations of
\eqref{phiMW} do not use our ``invented data''. However, the disadvantage is that RS is potentially less
suitable than Gaussian or Hutchinson precisely for tasks such as those in this second stage;
see~\cite{roas}.

A major source of computational expense
%, in terms of PDE solves, 
is the algorithm stopping criterion,
which in \cite{rodoas} was taken to be
\begin{eqnarray}
\phi(\mm) \leq \rho ,
\label{stop_crit}
\end{eqnarray}
for a specified tolerance $\rho$.
In~\cite{rodoas}, we deliberately employed this criterion
in order to be able to make fair comparisons among different methods.
However, the evaluation of $\phi$ for this purpose is very expensive when $s$ is large,
and in practice $\rho$ is hardly ever known in a rigid sense. 
In any case, this evaluation should be carried out as rarely as possible. 
In~\cite{rodoas}, we addressed this by proposing a safety check, called ``uncertainty check'', 
which uses~\eqref{phiMW} as an unbiased estimator of $\phi(\mm)$ with a stochastic weight matrix 
$W=W^e$ which has far fewer columns than $s$, provided the columns of $W^e$ are independent and 
satisfy~\eqref{w_cond}. 
Thus, in the course of an iteration
we can perform the relatively inexpensive uncertainty check whether    
\begin{equation}
\hat \phi(\mm,W^e) \le \rho .
\label{uncert_check}
\end{equation}
This is like the stopping criterion, but in expectation (with respect to $W^e$). 
If~\eqref{uncert_check} is satisfied, it is an indication that~\eqref{stop_crit} is likely to be satisfied as well, 
so we check the expensive~\eqref{stop_crit} only then. 

In the present article, we propose an alternative heuristic method of 
replacing~\eqref{stop_crit} with another uncertainty check evaluation as 
in~\eqref{uncert_check} with an independently drawn weight matrix $W^e \in \mathbb{R}^{s \times t_{n}}$, 
whose $t_{n}$ columns have i.i.d. elements drawn from the Rademacher distribution 
(NB the Hutchinson estimator has smaller variance than Gaussian). 
The sample size $t_{n}$ can be heuristically set as 
\begin{equation}
t_{n} = \min{(s,\max{(t_{0},s_{n})})} ,
\label{soft_stop_crit_t0}
\end{equation}
where $t_{0} > 1$ is some preset minimal sample size for this purpose.
Thus, for each algorithm variant~\eqref{alg1_data_i},~\eqref{alg1_data_ii} or~\eqref{alg1_data_iii}, 
we consider two stopping criteria, namely,
\begin{enumerate}[(a)]
\item the hard \eqref{stop_crit}, and \label{alg1_stop_a} 
\item the more relaxed \eqref{uncert_check}+\eqref{soft_stop_crit_t0}. \label{alg1_stop_b}
\end{enumerate}

When using the original data $D$ in the second stage of our general algorithm, 
as in variants~\eqref{alg1_data_i} and~\eqref{alg1_data_iii} above, since the projection matrices $P_{i}$ 
are not the same across experiments, 
%there is no computational advantage in using Rademacher or standard normal distributions, and 
one is restricted to the RS method as an unbiased estimator. 
However, when the completed data is used and we only have one $P$ for all experiments, 
we can freely use the stochastic SS methods and leverage their rather better accuracy
in order to estimate the true misfit $\phi(\mm)$. 
This is indeed an important advantage of data completion methods.
 
However, when using the completed data $\tilde D$ in the second stage of our general algorithm, 
as in variant~\eqref{alg1_data_ii}, an issue arises: when the data is completed, the given tolerance $\rho$ 
loses its meaning and we 
need to take into account the effect of the additional data to calculate a new tolerance. 
Our proposed heuristic approach is to replace $\rho$ with a new tolerance
${\rho} := (1+c)\rho$, where $c$ is the percentage of the data that needs to be completed
expressed as a fraction. 
For example, if $30\%$ of data is to be completed then we set ${\rho} := 1.3 \rho$. 
Since the completed data after using~\eqref{eqn_sub_01_datainterp} or~\eqref{eqn_sub_02_datainterp} 
is smoothed and denoised, we only need to add a small fraction of the initial tolerance to get the new one, 
and in our experience, $1+c$ is deemed to be a satisfactory factor. 
We experiment with this less rigid stopping criterion in Section~\ref{sec:results}.

%%%%%%%%%%%%%%%%%%%%%%%%%%%%%%%%%%%%%%%%%%%%%%%%%%%%%%%%%%%%%%%

\subsection{General algorithm}
\label{sec:alg}

Our general algorithm utilizes a stabilized Gauss-Newton (GN) method~\cite{doas3}, where each
iteration consists of two stages as described in Section~\ref{sec:comp_alg}.
In addition to combining the elements described above, this algorithm also provides
a schedule for selecting the sample size $s_n$ in the $n$th stabilized GN iteration.
In Algorithm~\ref{alg1}, variants~\eqref{alg1_data_i},~\eqref{alg1_data_ii} and~\eqref{alg1_data_iii}, and criteria~\eqref{alg1_stop_a} and~\eqref{alg1_stop_b},
are as specified in Section~\ref{sec:comp_alg}.

\begin{algorithm}[htb]
\caption{Solve inverse problem using variant~\eqref{alg1_data_i},~\eqref{alg1_data_ii} or~\eqref{alg1_data_iii}, cross validation, 
and stopping criterion~\eqref{alg1_stop_a} or~\eqref{alg1_stop_b}}
\begin{algorithmic}
\STATE \textbf{Given:} sources $Q$, 
measurements  $\hat D$, measurements $\bar D$,
stopping tolerance $\rho$, decrease factor $\kappa < 1$, and initial guess $\mm_{0}$.
\STATE \textbf{Initialize:} $\mm = \mm_{0} \; , \; s_{0} = 1$.
\FOR {$n = 0,1,2, \ldots$ until termination} 
\STATE - Choose a $W^{f}_{n} \in \R^{s \times s_n}$ stochastically from appropriate distribution.
\STATE - \textbf{Fitting}: Perform one stabilized GN iteration, based on $\hat D$, with $W=W^{f}_{n}$.
\STATE - Choose $W^{c}_{n} \in \R^{s \times s_n}$ and $W^{e}_{n} \in \R^{s \times s_n}$ stochastically from appropriate distribution.
\IF {$\hat \phi(\mm_{n+1},W^{c}_{n}) \leq \kappa \hat \phi(\mm_{n},W^{c}_{n})$, based on $\bar D$,
i.e., \textbf{Cross Validation} holds}
\STATE - \textbf{Uncertainty Check}: Compute~\eqref{phiMW} based on $\bar D$ using $\mm_{n+1}$ and $W^{e}_{n}$.
\IF {\eqref{uncert_check} holds}
\STATE - \textbf{Stopping Criterion}: 
\IF {Option~\eqref{alg1_stop_a} selected and \eqref{stop_crit} holds} 
\STATE {\bf terminate}; otherwise set $s_{n+1} = s_n$.
\ELSE 
\STATE Set $t_{n} = \min{(s,\max{(t_{0},s_{n})})}$.
\STATE Draw another $W^{e}_{n} \in \R^{s \times t_n}$ stochastically from appropriate distribution.
{\bf Terminate} if \eqref{uncert_check} holds using $\bar D$; otherwise set $s_{n+1} = s_n$.
\ENDIF
\ENDIF
\ELSE
\STATE - \textbf{Sample Size Increase}: for example, set $s_{n+1} = \min(2 s_{n},s)$.
\ENDIF
\ENDFOR
\end{algorithmic}
\label{alg1}
\end{algorithm}

\add{For implementation details as well as convergence and regularization results pertaining to this algorithm,
we refer to~\cite{rodoas,doas3} and references therein.}
% and to the newer~\cite{roszas}.} 

%%%%%%%%%%%%%%%%%%%%%%%%%%%%%%%%%%%%%%%%%%%%%%%%%%%%%%%%%%%%%%%%%%%%%%%%%%%%%%%%%%%%%%%%%%%%%

\subsection{The DC resistivity inverse problem}
\label{sec:detail}

For the forward problem we consider, following \cite{HaberChungHermann2010,doas3,rodoas}, a linear PDE of the form
\begin{subequations}
\begin{eqnarray}
\div (\sigma(\xx) \grad u) = q(\xx), \quad \xx \in  \Omega ,
\label{2.1a}
\end{eqnarray}
where $\sigma$ is a given conductivity function which may be rough (e.g., discontinuous) but is bounded away from $0$: there is
a constant $\sigma_0 > 0$ such that $\sigma (\xx) \geq \sigma_0 , \; \forall \xx \in \Omega$.
\add{A similar PDE is used for the EIT problem.}
This elliptic PDE is subject to the homogeneous Neumann boundary conditions
\begin{eqnarray}
\frac{\partial u}{\partial n} = 0, \quad \xx \in \partial\Omega. \label{2.1b}
\end{eqnarray}
\label{2.1}
\end{subequations}

In our numerical examples we take $\Omega \subset \R^d$ to be the unit square or unit cube, 
and the sources $\qq$ to be the differences of $\delta$-functions.
Furthermore, the receivers (where data values are measured) lie in $\partial \Omega$,
so in our data completion algorithms we approximate data along one of four edges in the 2D
case or within one of six square faces in the 3D case.
\add{The setting of our experiments, which follows that used in~\cite{rodoas}, 
is more typical of DC resistivity than of the EIT problem.}   

For the inverse problem we introduce additional a priori information, when such is available, via a 
point-wise parameterization of $\sigma(\xx)$ in terms of $m(\xx)$.
Define the transfer function
\begin{eqnarray}
\psi(\tau) = \psi (\tau; \theta,\alpha_{1},\alpha_{2}) = 
\alpha \tanh\left(\frac{\tau}{\alpha\theta}\right) + \frac{\alpha_{1} +\alpha_{2}}2 , 
%\label{2.5} \\
\quad \alpha = \frac{\alpha_{2} -\alpha_{1}}2 \label{2.5}.
\end{eqnarray}
If we know  that the sought conductivity function $\sigma (\xx)$ 
takes only one of two values, $\sigma_I$ or $\sigma_{II}$, at each $\xx$, 
%Viewing one of these as a background value, the problem
%is that of shape optimization. Such an assumption greatly stabilizes the inverse problem~\cite{alve}. 
%As in~\cite{doas,doas1,doas3,rodoas} 
then we use an approximate level set function representation, 
%methods 
writing $\sigma (\xx) = \lim_{h \rightarrow 0} \sigma(\xx;h)$, where
% = e^{\psi (m;0)} = e^{\psi (m)}$, with
\begin{eqnarray}
\sigma(\xx;h) = \psi(m(\xx ); h, \sigma_{I}, \sigma_{II} ) . \label{2.7}
\end{eqnarray}
The function $\psi$ here depends on the resolution, or grid width $h$.
More commonly, we may only know reasonably tight bounds, say $\sigma_{\min}$ and $\sigma_{\max}$,
such that $ \sigma_{\min} \leq \sigma (\xx) \leq \sigma_{\max}$.
Such information may be enforced using \eqref{2.5} by defining
\begin{eqnarray}
 \sigma(\xx) = \psi (m(\xx)), \quad {\rm with~} \psi(\tau) = \psi(\tau; 1, \sigma_{\min}, \sigma_{\max} ) . \label{2.6}
 \end{eqnarray}
For details of this, as well as the PDE discretization and the {\em stabilized} GN iteration
used, we refer to~\cite{doas3,rodoas} and references therein.

%%%%%%%%%%%%%%%%%%%%%%%%%%%%%%%%%%%%%%%%%%%%%%%%%%%%%%%%%%%%%%%%%%%%%%%%%%%

\section{Data completion}
\label{sec:datainterp}
Let $\Lambda_{i}\subset \partial\Omega$ denote the point set of receiver
locations for the $i^{th}$ experiment. Our goal here is to extend 
the data for each experiment to the union
$\Lambda = \bigcup_i \Lambda_{i} \subseteq \partial\Omega$, the common measurement domain.
To achieve this, we choose a suitable boundary patch $\Gamma \subseteq \partial\Omega$, 
such that $\Lambda \subset \bar{\Gamma}$, where $\bar{\Gamma}$ denotes the closure of 
$\Gamma$ with respect to the bounbdary subspace topology. 
For example, one can choose $\Gamma$ to be the interior of the 
convex hull (on $\partial \Omega$) of $\Lambda$. 
We also assume that $\Gamma$ can be selected such that it is a simply connected open set.
%for example the interior of the convex hull (on $\partial \Omega$) of $\Lambda$.
For each experiment $i$, we then construct an extension function $v_i$ on $\bar{\Gamma}$
which approximates the measured data on $\Lambda_i$. The
extension method can be viewed as an inverse problem, and we select
a regularization based on knowledge of the function space that $v_i$
(which represents the restriction of potential $u_i$ to $\Gamma$) should
live in.
%, which depends on the prior knowledge of the smoothness of the
%conductivity function which we assume to be piecewise continuous on
%$\Omega$.
Once $v_i$ is constructed, the extended data, $\tilde{\dd}_i$, is
obtained by restricting $v_i$ to $\Lambda$, denoted in what follows by
$v_{i}^\Lambda$. Specifically, for the receiver location $x_{j} \in
\Lambda$, we set $[\tilde{\dd}_{i}]_{j} = v_{i}(x_{j})$, where
$[\tilde{\dd}_{i}]_{j}$ denotes the $j^{th}$ component of vector
$\tilde{\dd}_{i}$ corresponding to $x_{j}$. 
Below we show that the trace of potential $u_i$ to the boundary is indeed continuous, 
thus point values of the extension function $v_i$ make sense.

In practice, the conductivity $\sigma(\xx)$ in~\eqref{2.1a} is often piecewise smooth with finite jump discontinuities. 
As such one is faced with two scenarios leading to two approximation
methods for finding $v_i$: 
(a) the discontinuities are some distance away from $\Gamma$; and 
(b) the discontinuities extend all the way to $\Gamma$. 
These cases result in a different a priori smoothness of the field $v_i$ on $\Gamma$. 
Hence, in this section we treat these cases separately and propose an appropriate data completion algorithm for each.

Consider the problem~\eqref{2.1}.
In what follows we assume that $\Omega$ is a bounded open domain and $\partial\Omega$ is Lipschitz.
Furthermore, we assume that $\sigma$ is continuous on  a finite number 
of disjoint subdomains, $\Omega_{j} \subset \Omega$, such that 
$\bigcup_{j=1}^{N} \overline{\Omega}_{j} = \overline{\Omega}$ and
$\partial{\Omega}_{j} \cap \overline{\Omega} \in C^{2,\alpha}$,
for some $0 < \alpha \leq 1$,
i.e., $\sigma \in C^{2}(\overline{\Omega}_{j}), \;  j = 1,\ldots ,N$.\footnote{$\overline{X}$ denotes the closure of $X$ with respect to the appropriate topology.}
Moreover, assume that $q \in L_{\infty}(\Omega)$ and $q \in \text{Lip}(\overline{\Omega}_{j} \cap \Omega)$, i.e., 
it is Lipschitz continuous in each subdomain; 
this assumption will be slightly weakened in Subsection~\ref{point_source}.

Under these assumptions and for the Dirichlet problem with a $C^{2}(\partial \Omega)$ boundary condition, 
there is a constant $\gamma$, $0 < \gamma \leq 1$, 
such that $u \in C^{2,\gamma}(\overline{\Omega}_{j})$~\cite[Theorem 4.1]{isakov}. 
In~\cite[Corollary 7.3]{LiVog}, it is also shown that 
the solution on the entire domain is H\"{o}lder continuous, 
i.e., $u \in C^{\beta}(\overline{\Omega})$ for some $\beta$, $0 < \beta \leq 1$. 
Note that the mentioned theorems are stated for the Dirichlet problem, and in the present article 
we assume a homogeneous Neumann boundary condition. 
However, in this case we have infinite smoothness in the normal direction at the boundary, 
i.e., $C^{\infty}$ Neumann condition, 
and no additional complications arise; 
see for example~\cite{Shkoller}. So the results stated above would still hold for \eqref{2.1}.

%%%%%%%%%%%%%%%%%%%%%%%%%%%%%%%%%%%%%%%%%%%%%%%
 
\subsection{Discontinuities in conductivity are away from common measurement domain}
\label{subsec_01_datainterp}

This scenario corresponds to the case where the boundary patch $\Gamma$ can be chosen 
such that $\Gamma  \subset ( \partial{\Omega}_{j} \cap \partial\Omega )$  for some $j$. 
Then we can expect a rather smooth field at $\Gamma$; 
precisely, $u \in C^{2,\gamma}(\overline{\Gamma})$. 
Thus, $u$ belongs to the Sobolev space $H^{2}(\Gamma)$, and we can impose this knowledge in our 
continuous completion formulation. 
For the $i^{th}$ experiment, we define our data completion function 
$v_i \in H^{2}(\Gamma) \cap C(\overline{\Gamma})$ as
\begin{equation}
v_i =  \argmin{v}\ \ \frac{1}{2} \| v^{\Lambda_{i}} - \dd_{i} \|^2_2 + \lambda \left\|\Delta_S v\right\|_{L_{2}(\Gamma)}^{2},
\label{eqn_sub_01_datainterp}
\end{equation}
where $\Delta_S$ is the Laplace-Beltrami operator for the Laplacian on
the boundary surface and $v^{\Lambda_{i}}$ is the restriction of the continuous function $v$ to the point set $\Lambda_{i}$.
% and $\hat{P}_i$ projects from $\Gamma$ to $\Xi_i$ 
%(just as $P_i$ projects from $\Omega$ to $\Xi_i$). 
The regularization parameter $\lambda$ depends on the amount of noise in our data;
%and on how much smoothness we would like to impose on the approximated forward problem solution;
see Section~\ref{choose_lambda}.

We next discretize~\eqref{eqn_sub_01_datainterp} using
%by any method desired, and we use 
a mesh on $\Gamma$ as specified in Section~\ref{sec:results},
and solve the resulting linear least squares problem using standard techniques. 

Figure~\ref{fig_lap_interp} shows an example of such data completion. 
The true field and the measured data correspond to an experiment described in Example~\ref{exp03} 
of Section~\ref{sec:results}. 
We only plot the profile of the field along the top boundary of the 2D domain. 
As can be observed, the approximation process
imposes smoothness which results in an excellent completion of the missing data, despite the presence of 
noise at a fairly high level. 
%Note that the extrapolated points at the bottom left corner of the graph are also recovered to a pleasing accuracy.

\begin{figure}[htb]\centering
 {\includegraphics[width=.6\linewidth]{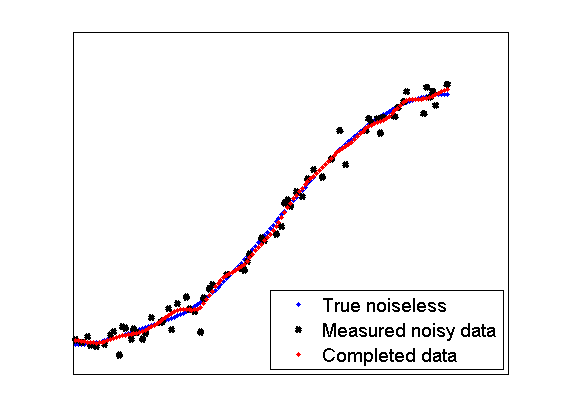}}
    \caption{Completion using the regularization~\eqref{eqn_sub_01_datainterp}, for 
an experiment taken from Example~\ref{exp03} where $50\%$ of 
the data requires completion and the noise level is $5\%$. 
Observe that even in the presence of significant noise, the data completion formulation~\eqref{eqn_sub_01_datainterp} 
achieves a good quality field reconstruction.
\label{fig_lap_interp}}
\end{figure}

We hasten to point out that the results in Figure~\ref{fig_lap_interp}, 
as well as those in Figure~\ref{fig_grad_interp} below, 
pertain to differences in field values, i.e., the solutions of the forward problem $u_i$, 
and not those in the inverse problem solution shown, e.g., in Figure~\ref{fig05}.
The good quality approximations in Figures~\ref{fig_lap_interp} and~\ref{fig_grad_interp}
generally form a necessary but not sufficient condition for success in the inverse problem
solution.

%%%%%%%%%%%%%%%%%%%%%%%%%%%%%%%%%%%%%%%%%%%%%%%%%%%%%%%%%%%%%%%%%%%%%%%

\subsection{Discontinuities in conductivity extend all the way to common measurement domain}
\label{subsec_02_datainterp}

This situation corresponds to the case in which $\Gamma$ can only be chosen such that it intersects more 
than just one of the 
$(\partial{\Omega} \cap \partial{\Omega}_{j})$'s. More precisely, assume that there is an index set 
$\mathcal{J} \subseteq \{1,2, \cdots N\}$ with $|\mathcal{J}| = K \geq 2$  such that 
$\left\{\Gamma \cap (\partial{\Omega} \cap \partial{\Omega}_{j})^{\circ} \;\;,\;\;  j \in \mathcal{J}\right\}$ 
forms a set of disjoint subsets of $\Gamma$ such that 
$\overline{\Gamma} = \bigcup_{j \in \mathcal{J}} \overline{\Gamma \cap ( \partial{\Omega} \cap \partial{\Omega}_{j} )^{\circ}}$,  
where $X^{\circ}$ denotes the interior of the set $X$, 
and that the interior is with respect to the subspace topology on $\partial \Omega$. 
In such a case $u$, restricted to $\Gamma$, is no longer necessarily in $H^{2}(\Gamma)$. Hence, 
the smoothing term in~\eqref{eqn_sub_01_datainterp} is no longer valid, 
as $\left\|\Delta_S u \right\|_{L_{2}(\Gamma)}$ might be undefined or infinite. 
However, as described above, we know that the solution is piecewise smooth and overall continuous, 
i.e., $u \in C^{2,\gamma}(\overline{\Omega}_{j})$ and $u \in C^{\beta}(\overline{\Omega})$. 
%Theorem~\ref{thm01} below 
The following theorem shows that the smoothness on $\Gamma$ is not completely gone:
we may lose one degree of regularity at worst. 
%This will be a consequence of the following intuitive proposition:

\begin{theorem}
\label{thm01}
Let $U$ and $\{U_{j} | \; j = 1,2,\ldots ,K\}$ be open and bounded sets such that the $U_j$ are 
pairwise disjoint and $\overline{U} = \bigcup_{j=1}^{K} \overline{U}_{j}$. 
Further, let $u \in C(\overline{U}) \cap H^{1}(U_{j}) \; \forall j$. 
Then $u \in H^{1}(U)$.
\end{theorem}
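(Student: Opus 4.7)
The plan is to exhibit the weak gradient of $u$ on $U$ piecewise and verify the defining duality using the continuity of $u$ across the interfaces. Set $V := \bigcup_{j=1}^{K} U_j$ and define $g \in L^2(U)^d$ by $g|_{U_j} := \nabla u|_{U_j}$ (the $H^1(U_j)$ weak gradient) and $g := 0$ on the interface set $\Sigma := U \setminus V \subseteq \bigcup_j \partial U_j$. Pairwise disjointness of the $U_j$'s makes this unambiguous, and
\[
\|g\|_{L^2(U)}^2 \;=\; \sum_{j=1}^{K} \|\nabla u\|_{L^2(U_j)}^2 \;<\; \infty .
\]
Continuity of $u$ on the compact set $\overline{U}$ gives $u \in L^\infty(U) \subset L^2(U)$, so the only remaining task is to verify that $g$ is the distributional gradient of $u$.

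To do so, fix $\phi \in C_c^\infty(U)$ and show that for each coordinate direction $i$,
\[
\int_U u\, \partial_i \phi \, dx \;=\; -\int_U g_i\, \phi \, dx .
\]
The natural approach is to split the left-hand side as $\sum_j \int_{U_j} u\, \partial_i \phi$ (the integral over $\Sigma$ vanishes, since under the regularity of the interfaces inherited from Section~\ref{sec:detail} the set $\Sigma$ is Lebesgue-null). On each $U_j$, Green's identity gives
\[
\int_{U_j} u\, \partial_i \phi \;=\; -\int_{U_j} (\partial_i u)\, \phi \;+\; \int_{\partial U_j} u\, \phi\, \nu_i^{(j)}\, dS ,
\]
where $\nu^{(j)}$ is the outward unit normal to $U_j$. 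Summing over $j$, every interior interface $\partial U_j \cap \partial U_k$ with $j \neq k$ is traversed twice with opposite normals; because $u$ is continuous on $\overline{U}$ the two boundary traces agree, so the interior-interface contributions cancel in pairs. The remaining boundary integrals sit on $\partial U \cap \partial U_j$, where $\phi$ vanishes by compact support. Collecting surviving terms yields the displayed identity, so $g$ is the weak gradient and $u \in H^1(U)$.

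The main obstacle is the regularity of the interfaces $\partial U_j$: Green's identity demands enough smoothness (e.g., Lipschitz) for outward normals and two-sided boundary traces to be well-defined and matching, and without such regularity the literal hypotheses admit pathological counterexamples (a one-dimensional fat-Cantor staircase function is continuous on $\overline{U}$, constant on each component of the open complement $U_1$ and therefore in $H^1(U_1)$, yet not in $H^1(U)$). In our application this regularity is supplied by the $C^{2,\alpha}$ interfaces assumed for $\sigma$, and the cancellation argument above is rigorous. An alternative that sidesteps Green's identity is a difference-quotient argument along a.e.\ line in direction $e_i$: since there are only $K$ subdomains, the continuity of $u$ glues the one-dimensional fundamental theorem of calculus across the finitely many components to produce
\[
u(x+h e_i) - u(x) \;=\; \int_0^h g_i(x+s e_i)\, ds ,
\]
after which Jensen's inequality and Fubini yield $\|D_i^h u\|_{L^2(V')} \leq \|g_i\|_{L^2(U)}$ uniformly in $h$ for every $V' \Subset U$, which is the standard characterization of $H^1$.
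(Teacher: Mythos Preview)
Your main argument is essentially identical to the paper's: define the candidate weak gradient piecewise on the $U_j$, split $\int_U u\,\partial_i\phi$ over the $U_j$, integrate by parts on each piece, and cancel the interface boundary terms via opposite outward normals together with the continuity of $u$ (with compact support of $\phi$ killing the contributions on $\partial U$). You are in fact more careful than the paper in flagging the implicit regularity on $\partial U_j$ needed for Green's identity and for $|\Sigma|=0$---the paper silently uses both---and your fat-Cantor counterexample correctly shows the statement fails under the literal hypotheses without such regularity; the difference-quotient alternative you sketch is not in the paper but is a legitimate way to avoid surface integrals once one still assumes the interfaces are Lebesgue-null.
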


\begin{proof}
It is easily seen that since $u \in C(\overline{U})$ and $U$ is bounded, then $u \in L_{2}(U)$. 
Now, let $\phi \in C^{\infty}_{0} (U)$ be a test function and denote $\partial_{i} \equiv \frac{\partial}{\partial \xx_{i}}$. 
Using the assumptions that the $U_{j}$'s form a partition of $U$, 
$u$ is continuous in $\overline{U}$, 
$\phi$ is compactly supported inside $U$, and the fact that the $\partial U_{j}$'s have measure zero, we obtain
\begin{eqnarray*}
\label{3.1}
\int_{U}{u \partial_{i} \phi} = \int_{\overline{U}}{u \partial_{i} \phi} = \int_{\cup_{j = 1}^{K} \overline{U_{j}}}{u \partial_{i} \phi} 
= \int_{(\cup_{j = 1}^{K} U_{j}) \bigcup (\cup_{j = 1}^{K} \partial U_{j})}{u \partial_{i} \phi} \\
= \int_{\cup_{j = 1}^{K} U_{j}}{u \partial_{i} \phi} = \sum_{j = 1}^{K} \int_{U_{j}}{u \partial_{i} \phi} 
= \sum_{j = 1}^{K} \int_{\partial U_{j}}{u \phi \nu_{i}^{j}} - \sum_{j = 1}^{K} \int_{U_{j}}{\partial_{i} u \phi}, \\
\end{eqnarray*}
where $\nu_{i}^{j}$ is the $i^{th}$ component of the outward unit surface normal to $\partial U_{j}$. 
Since $u \in H^{1}(U_{j}) \; \forall j$, the second part of the rightmost expression makes sense. 
Now, for two surfaces $\partial U_{m}$ and $\partial U_{n}$ such that
$\partial U_{m} \cap \partial U_{n} \neq \emptyset$, 
we have $\nu_{i}^{m}(\xx) = -\nu_{i}^{n}(\xx) \; \forall \xx \in \partial U_{m} \cap \partial U_{n}$. 
This fact, and noting in addition that $\phi$ is compactly supported inside $U$, 
makes the first term in the right hand side vanish. 
We can now define the weak derivative of $u$ with respect to $\xx_{i}$ to be
\begin{equation}
\label{3.2}
v(\xx) = \sum_{j = 1}^{K} \partial_{i} u \mathcal{X}_{U_{j}} ,
\end{equation}
where $\mathcal{X}_{U_{j}}$ denotes the characteristic function of the set $U_{j}$. 
%So we get
This yields
\begin{equation}
\label{3.3}
\int_{U}{u \partial_{i} \phi} = - \int_{U}{v \phi}.
\end{equation}
Also
\begin{equation}
\label{3.4}
\| v \|_{L_{2}(U)} \leq \sum_{j = 1}^{K} \| \partial_{i} u \|_{L_{2}(U_{j})}  < \infty, \\
\end{equation}
and thus we conclude that $u \in H^{1}(U)$.
$\qed$
\end{proof}

If the assumptions stated at the beginning of this section hold then  
we can expect a field $u \in H^{1}(\Gamma) \cap C(\bar{\Gamma})$. This is obtained by invoking Theorem~\ref{thm01} 
with $U = \Gamma$ and $U_{j} = \Gamma \cap (\partial{\Omega} \cap \partial{\Omega}_{j})^{\circ}$ for all $j \in \mathcal{J}$.

Now we can formulate the data completion method as
\begin{equation} 
v_i =  \argmin{v}\ \ \frac{1}{2} \| v^{\Lambda_{i}} - \dd_{i} \|^2_2 + 
\lambda \left\|\grad_S v\right\|_{L_{2}(\Gamma)}^{2} \label{eqn_sub_02_datainterp},
\end{equation}
where $v^{\Lambda_{i}}$ and $\lambda$ are as in Section~\ref{subsec_01_datainterp}.

Figure~\ref{fig_grad_interp} shows an example of data completion using the formulation~\eqref{eqn_sub_02_datainterp},
depicting the profile of $v_i$ along the top boundary.
The field in this example is continuous and only piecewise smooth.  
The approximation process imposes less smoothness along the boundary as compared to~\eqref{eqn_sub_01_datainterp},   
and this results in an excellent completion of the missing data, despite a nontrivial level of noise. 

\begin{figure}[htb]\centering		
{\includegraphics[width=.6\linewidth]{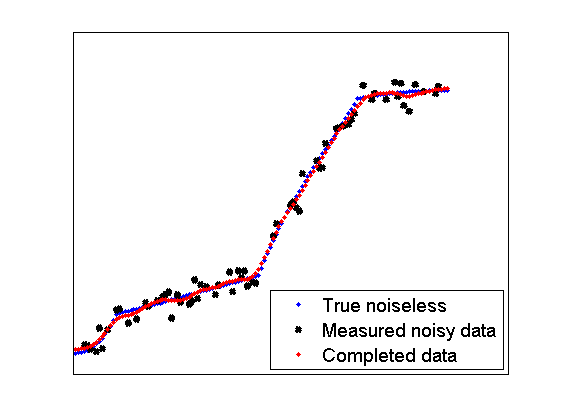}}
    \caption{Completion using the regularization~\eqref{eqn_sub_02_datainterp}, for
an experiment taken from Example~\ref{exp02} where $50\%$ of the data 
		requires completion and the noise level is $5\%$. 
		Discontinuities in the conductivity extend to the measurement domain and their effect on the field profile
		along the boundary can be clearly observed. Despite the large amount of noise, 
		data completion formulation~\eqref{eqn_sub_02_datainterp} achieves a good reconstruction.
\label{fig_grad_interp}}
\end{figure}
 
To carry out our data completion strategy, the problems
\eqref{eqn_sub_01_datainterp} or~\eqref{eqn_sub_02_datainterp} are discretized.
This is followed by a straightforward linear least squares technique, 
which can be carried out very efficiently.
Moreover, this is a preprocessing stage performed once, which is completed before the 
algorithm for solving the nonlinear
inverse problem commences. 
Also, as the data completion for each experiment can be carried out independently of others, 
the preprocessing stage can be done in parallel if needed.
Furthermore, the length of the vector of unknowns $v_i$ is relatively small compared to those of $u_i$ 
because only the boundary is involved. All in all the amount of work involved in the 
data completion step is dramatically less than one full evaluation of
the misfit function~\eqref{1.1}.

%%%%%%%%%%%%%%%%%%%%%%%%%%%%%%%%%%%%%%%%%%%%%%%
 
\subsection{Determining the regularization parameter} 
\label{choose_lambda}

Let us write the discretization of~\eqref{eqn_sub_01_datainterp} or~\eqref{eqn_sub_02_datainterp} as
\begin{equation}
\min_{\vv}  \hf \| \hat{P}_{i} \vv - \dd_{i} \|_{2}^{2} + \lambda \|L \vv \|_{2}^2 ,
\label{data_completion_discretized}
\end{equation}
where $L$ is the discretization of the surface gradient or Laplacian operator, 
$\vv$ is a vector whose length is the size of the discretized 
%common measurement domain 
$\Gamma$, $\hat{P}_{i}$ is the projection matrix
from the discretization of $\Gamma$ to $\Lambda_{i}$, and $\dd_{i}$ is the $i^{th}$ original measurement vector.
 
Determining $\lambda$ in this context is a textbook problem; see, e.g.,~\cite{vogelbook}.
Viewing it as a parameter, we have a linear least squares problem for $\vv$ in \eqref{data_completion_discretized},
whose solution can be denoted $\vv (\lambda)$.
Now, in the simplest case, which we assume in our experiments,
the noise level for the $i^{th}$ experiment, $\eta_{i}$, is known, so one can use 
the discrepancy principle to pick $\lambda$ such that
\begin{equation}
\left\| \hat{P}_{i} \vv(\lambda) - \dd_{i} \right\|_{2}^{2} \leq \eta_{i} .
\label{disc_princ}
\end{equation}
Numerically, this is done by setting equality in \eqref{disc_princ} and solving the resulting nonlinear equation
for $\lambda$ using a standard root finding technique.

If the noise level is not known, one can use the generalized cross validation (GCV) method or the L-curve method; see~\cite{vogelbook}.
We need not dwell on this longer here.

%%%%%%%%%%%%%%%%%%%%%%%%%%%%%%%%%%%%%%%%%%%%%%%
 
\subsection{Point sources and boundaries with corners}
\label{point_source}
In the numerical examples of Section~\ref{sec:results}, as in \cite{doas3,rodoas},
we use delta function combinations as the sources $q_i(\xx)$, 
\add{in a manner that is typical in exploration geophysics 
(namely, DC resistivity as well as low-frequency electromagnetic experiments), 
less so in EIT. However,} 
%\delete{and} 
these are clearly not honest $L_{\infty}$ functions.
Moreover, our domains $\Omega$ are a square or a cube and as such they have corners.
 
However, the theory developed above, and the data completion methods that it generates, can be extended to our 
experimental setting because we have control over the experimental setup.
The desired effect is obtained by simply separating the location of each source from any of the receivers,
and avoiding domain corners altogether.

Thus, consider in \eqref{2.1a} a source function of the form
\begin{eqnarray*} 
q(\xx) = \hat q(\xx) + \delta (\xx-\xx^*) - \delta (\xx - \xx^{**}),
\end{eqnarray*}
where $\hat q$ satisfies the assumptions previously made on $q$. Then we select
$\xx^*$ and $\xx^{**}$ such that there are two open balls $B(\xx^*,r)$ and $B(\xx^{**},r)$
of radius $r > 0$  each and centered at $\xx^*$ and $\xx^{**}$, respectively, where
(i) no domain corner belongs to $B(\xx^*,r)\cup B(\xx^{**},r)$, and (ii) $(B(\xx^*,r)\cup B(\xx^{**},r)) \cap \Gamma$ is empty.
Now, in our elliptic PDE problem the lower smoothness effect of either a domain corner
or a delta function is local! 
In particular, the contribution of the point source to the flux $\sigma \grad u$
is the integral of $\delta (\xx-\xx^*) - \delta (\xx - \xx^{**})$, and this is smooth outside the union of the two balls.

%\input exline

%%%%%%%%%%%%%%%%%%%%%%%%%%%%%%%%%%%%%%%%%%%%%%%%%%%%%%%%%%%%%%%%%%%%%%%%%%%%%%%%%%%%%%%%%%%%%%%%%%%%%%%%%%%%%%%%%%%%%%%%

\section{Numerical experiments}
\label{sec:results}

The PDE problem used in our experiments is described in Section~\ref{sec:detail}.
For each experiment $i$ there is a positive unit point source at $\xx^{i}_1$ and a negative sink 
at $\xx^{i}_{2}$, where $\xx^{i}_{1}$ and $\xx^{i}_{2}$ are two locations on the 
boundary $\partial \Omega$. 
Hence in \eqref{2.1a} we must consider sources of the form 
$q_i(\xx) = \delta(\xx-\xx_{1}^i) - \delta(\xx-\xx_{2}^i)$, 
i.e., a difference of two $\delta$-functions.
For our experiments in 2D, when we place a source on the left boundary, 
the corresponding sink on the right boundary is placed in every possible combination. 
Hence, having $p$ locations on the left boundary for the source would result in $s = p^2$ experiments.
The receivers are located at the top and bottom boundaries. As such, the completion steps~\eqref{eqn_sub_01_datainterp} or~\eqref{eqn_sub_02_datainterp} are carried out separately for the top and bottom boundaries. No source or receiver is placed at the corners.
In 3D we use an arrangement whereby 
four boreholes are located at the four edges of the cube, and source and sink pairs are put
at opposing boreholes in every combination, except that there are no
sources on the point of intersection of boreholes and the surface, 
i.e., at the top four corners, since these four nodes are part of the surface where data values are gathered.

In the sequel we generate data $\dd_i$ by using a chosen true model (or ground truth)
and a source-receiver configuration as described above. 
Since the field $u$ from~\eqref{2.1} is only determined up to a constant, only voltage
differences are meaningful. Hence we subtract for each $i$
the average of the boundary potential values
from all field values
at the locations where data is measured. As a result each row of the
projection matrix $P_i$ has zero sum.
This is followed by peppering these values with additive Gaussian noise to create the data $\dd_i$
used in our experiments.
Specifically, for an additive noise of $2\%$, say, denoting the ``clean data'' $l \times s$ matrix by $D^*$,
we reshape this matrix into a vector $\dd^*$ of length $sl$, calculate the standard deviation 
${\tt sd} = .02\| \dd^* \|/\sqrt{sl}$, and define $D = D^* + {\tt sd * randn(l,s)}$
using {\sc Matlab}'s random generator function {\tt randn}.

%In the following numerical examples, we compare the performance of Algorithm~\ref{alg1}
%using the completed data against the same algorithm using the RS method. 
%%As advocated in~\cite{rodoas}, 
%The Gaussian distribution is used for the mixing weights. 
%The RS method is employed in the uncertainty check step, 
%and the stopping criterion is verified using the original data.

For all of our numerical experiments, the ``true field'' is calculated on a grid that 
is twice as fine as the  one used to reconstruct the model. 
For the 2D examples, the reconstruction is done on a uniform grid of size $129^2$
with $s=961$ experiments in the setup described above. 
%\fred{Please read what follows regarding the mesh size of Example~\ref{exp03} here and see 
%if it is OK or it needs to be changed $\rightarrow$ }In Example~\ref{exp03} and for 
%plotting purposes, we had to record the misfit, for every GN iteration, on the entire data, 
%and hence, due to our computational constraints, we chose to do the reconstruction on a 
%uniform grid of size $17^3$. However, for the rest of the 3D examples 
%(Examples~\ref{exp04},~\ref{exp05}, and~\ref{exp06}), we increase the mesh size to $33^{3}$. For all 3D examples, we set $s = 512$. 
%$p=31$ sources/sinks are placed on the left and the right boundary resulting in $s = 961$ experiments in total. 
%For our 3D examples, the reconstruction is done on a uniform grid of size $33^3$ with 
For the 3D examples, we set $s = 512$ and employ a uniform grid of size $33^{3}$, except for  Example~\ref{exp03}
where the grid size is $17^3$.

\add{In the numerical examples considered below, we use true models with piecewise constant levels,
with the conductivities bounded away from $0$. 
For further discussion of such models within the context of EIT, see~\cite{gkljskm12}.}

Numerical examples are presented for both cases described in Sections~\ref{subsec_01_datainterp} and~\ref{subsec_02_datainterp}. 
For all of our numerical examples except Examples~\ref{exp05} and~\ref{exp06}, we use the transfer function~\eqref{2.6} 
with $\sigma_{\max} = 1.2 \max \sigma(\xx)$, 
and $\sigma_{\min} = \frac{1}{1.2} \min \sigma(\xx)$. In the ensuing calculations we then ``forget'' 
what the exact $\sigma (\xx)$ is.
Further, in the stabilized GN iteration we employ preconditioned conjugate gradient (PCG)
inner iterations, setting \add{as in \cite{rodoas}} the PCG iteration limit to $r=20$, and the PCG tolerance to $10^{-3}$. 
The initial guess is $\mm_{0} = {\bf 0}$. Examples~\ref{exp05} and~\ref{exp06} are carried out 
using the level set method~\eqref{2.7}. 
Here we can set $r = 5$, significantly lower than above. The initial guess for the level set example is 
a cube with rounded corners inside $\Omega$ (see Figure~2 in~\cite{rodoas}). 
%also as in \cite{rodoas}.
%displayed in Figure~\ref{level_init}.

%For the original data, in all examples, we use Algorithm~\ref{alg1} where for 
%Examples~\ref{exp04},~\ref{exp05}, and~\ref{exp06}, 
%we replace the rigid stopping criterion with the more relaxed one using the RS weight matrix. 
%In case of the completed data, for Examples~\ref{exp01},~\ref{exp02}, and~\ref{exp03}, 
%Algorithm~\ref{alg1} is used and for Examples~\ref{exp04},~\ref{exp05}, and~\ref{exp06}, we use Algorithm~\ref{alg1} with the more relaxed stopping criterion using the weight matrix whose elements are drawn from Rademacher distribution and also we adjust tolerance to account for the additional data resulting from completing the original data. 
For Examples~\ref{exp01},~\ref{exp02},~\ref{exp03} and~\ref{exp05}, in addition to displaying the log conductivities 
(i.e., $\log (\sigma)$) for each reconstruction, we also show the log-log plot of misfit on the 
entire data (i.e., $\| F(\mm) - D \|_F$) vs. PDE count. 
%For Examples~\ref{exp04},~\ref{exp05}, and~\ref{exp06}, we use the less rigid stopping criterion, and we do not plot the data misfits. For the latter examples when we use our proposed heuristic stopping criterion, we set $t_{0} = 100$ in~\eqref{soft_stop_crit_t0}. 
A table of total PDE counts (not including what extra is required for the plots)
for each method is displayed. 
In order to simulate the situation where sources do not share the same receivers, 
we first generate the data fully on the entire domain of measurement 
and then knock out at random some percentage of the generated data.
This setting roughly corresponds to an EMG experiment with faulty receivers.  
%In each of the numerical examples corresponding to the scenarios explained in Sections~\ref{subsec_01_datainterp} and~\ref{subsec_02_datainterp}, 
%by ``Completed Data'' we mean using Algorithms~\ref{alg1} or~\ref{alg2} with completed data employing 
%regularizations~\eqref{eqn_sub_01_datainterp} and~\eqref{eqn_sub_02_datainterp}, respectively. 

%\input exline

For each example, we use Algorithm~\ref{alg1} with one of the variants~\eqref{alg1_data_i},~\eqref{alg1_data_ii} 
or~\eqref{alg1_data_iii} paired with one of the stopping criteria~\eqref{alg1_stop_a} or~\eqref{alg1_stop_b}. 
For instance, when using variant~\eqref{alg1_data_ii} with the soft stopping criterion~\eqref{alg1_stop_b}, 
we denote the resulting algorithm by (\ref{alg1_data_ii},~\ref{alg1_stop_b}). 
For the relaxed stopping rule~\eqref{alg1_stop_b} we (conservatively) set $t_{0} = 100$ 
in~\eqref{soft_stop_crit_t0}.
A computation using RS applied to the original data, using variant (i,x), is compared to one using SS applied
to the completed data through variant (ii,x) or (iii,x), where x stands for~\ref{alg1_stop_a} or~\ref{alg1_stop_b}.

For convenience of cross reference, we gather all resulting seven algorithm comparisons and corresponding work counts
in Table~\ref{table01} below. For Examples~\ref{exp01},~\ref{exp02},~\ref{exp03} and~\ref{exp05}, 
the corresponding entries of this table should be read together with the misfit plots for each example.  

\begin{table}[!ht]
\begin{center}
\begin{tabular}{|l|l|cc|}
\hline 
{\bf Example} &  Algorithm & Random Subset & Data Completion
\\ \hline 
\ref{exp01} & $(i,a) ~|~ (iii,a)$ & 3,647 & 1,716 \\ \hline
\ref{exp02} & $(i,a) ~|~ (iii,a)$ & 6,279 & 1,754 \\ \hline
\ref{exp03} & $(i,a) ~|~ (iii,a)$ & 3,887 & 1,704 \\ \hline
\ref{exp04} & $(i,b) ~|~ (ii,b)$  & 4,004 & 579 \\ \hline 
\ref{exp05} & $(i,a) ~|~ (iii,a)$ & 3,671 & 935 \\ \hline
\ref{exp06} & $(i,b) ~|~ (ii,b)$  & 1,016 & 390 \\ \hline
\ref{exp07} & $(i,b) ~|~ (ii,b)$  & 4,847 & 1,217 \\ \hline
\end{tabular}
\end{center}
\caption{Algorithm and work in terms of number of PDE solves, comparing RS against data completion using Gaussian SS.
\label{table01}}
\end{table} 

\begin{example}
\label{exp01} 
In this example, we place two target objects of conductivity $\sigma_I = 1$ in a background of 
conductivity $\sigma_{II} = 0.1$, 
and $5\%$ noise is added to the data as described above. Also, $25\%$ of the data requires completion.
The discontinuities in the conductivity are touching the measurement domain, 
so we use~\eqref{eqn_sub_02_datainterp} to complete the data.
The hard stopping criterion~\eqref{alg1_stop_a} is employed, and iteration control is done using the original data,
i.e., variants (\ref{alg1_data_i},~\ref{alg1_stop_a}) and (\ref{alg1_data_iii},~\ref{alg1_stop_a})
are compared: see the first entry of Table~\ref{table01} and Figure~\ref{fig06}(a).

\begin{figure}[htb]\centering
     \begin{minipage}[htb]{0.3\linewidth}\centering
        \subfigure[True model]		{\includegraphics[width=.98\linewidth]{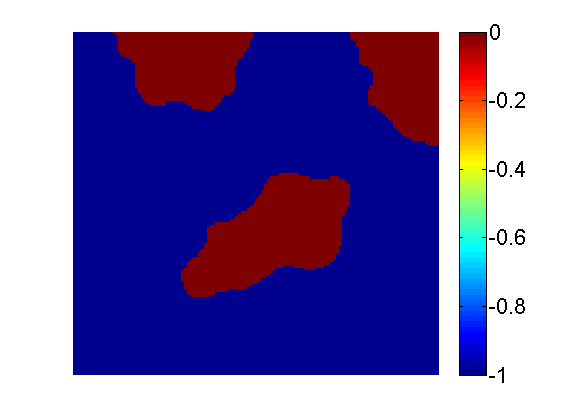}}
    \end{minipage}%\hfill
    \begin{minipage}[htb]{0.3\linewidth}\centering
        \subfigure[Random Subset] {\includegraphics[width=.98\linewidth]{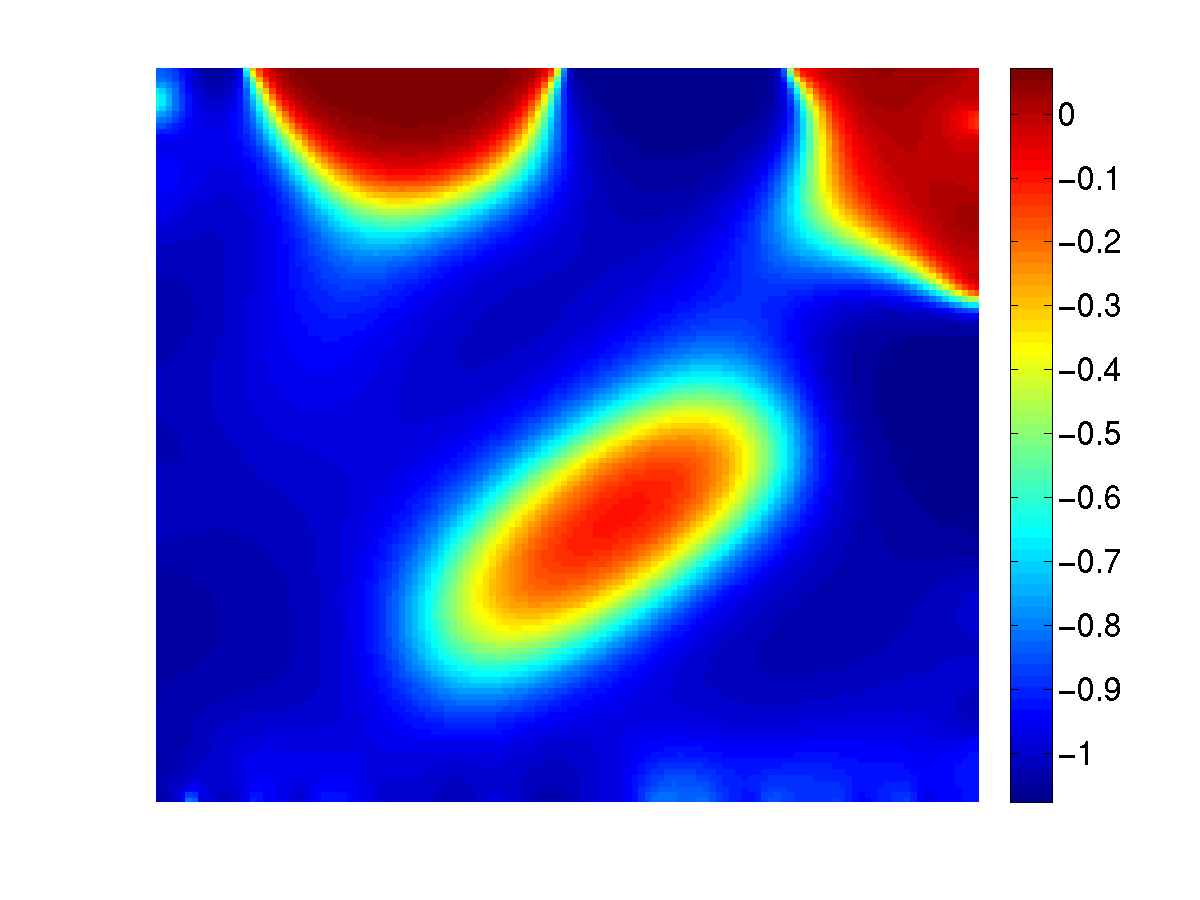}}
    \end{minipage}%\hfill
    \begin{minipage}[htb]{0.3\linewidth}\centering
        \subfigure[Data Completion] {\includegraphics[width=.98\linewidth]{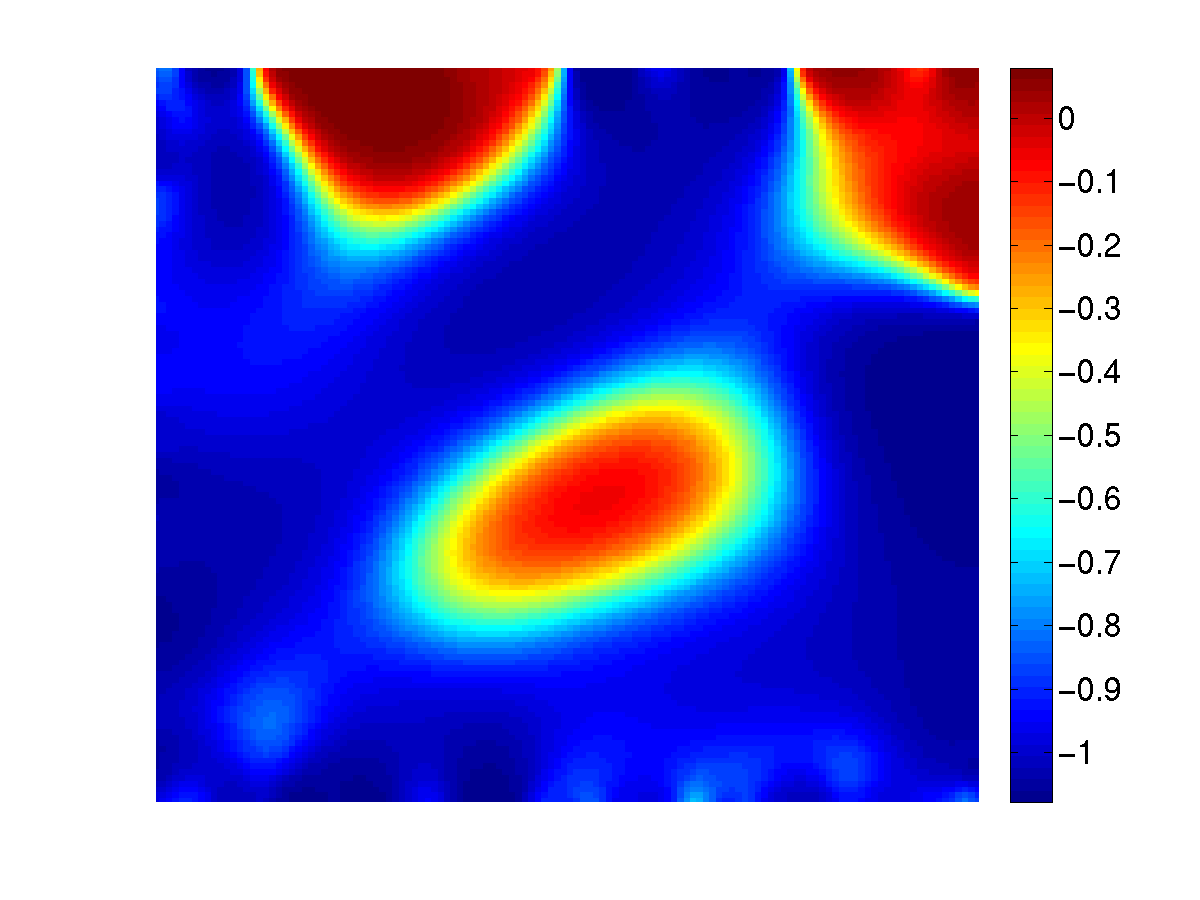}}
    \end{minipage}\hfill
    \caption{Example~\ref{exp01} -- reconstructed log conductivity with $25\%$ data missing and $5\%$ noise. 
%       (a) true solution, (b) Random Subset, (c) Data Completion. 
	Regularization~\eqref{eqn_sub_02_datainterp} has been used to complete the data. \label{fig03}}
\end{figure}

The corresponding reconstructions are depicted in Figure~\ref{fig03}. It can be seen that roughly the same
quality reconstruction is obtained using the data completion method at less than half the price.
\end{example}

\begin{example}
\label{exp02}
This example is the same as Example~\ref{exp01}, except that $50\%$ of the data is missing and requires completion. 
The same algorithm variants as in Example~\ref{exp01} are compared.
The reconstructions are depicted in Figure~\ref{fig04}, and comparative computational results are recorded in
Table~\ref{table01} and Figure~\ref{fig06}(b).

\begin{figure}[htb]\centering
     \begin{minipage}[htb]{0.3\linewidth}\centering
        \subfigure[True model]		{\includegraphics[width=.98\linewidth]{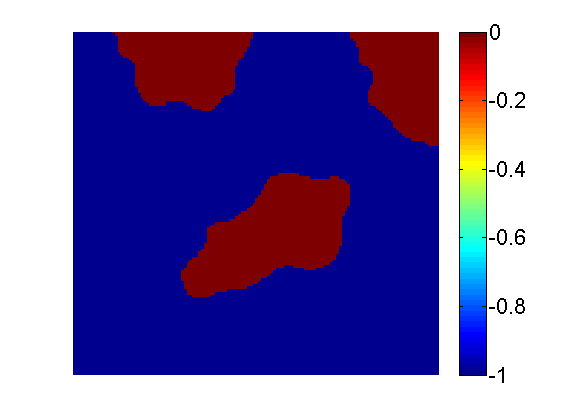}}
    \end{minipage}%\hfill
    \begin{minipage}[htb]{0.3\linewidth}\centering
        \subfigure[Random Subset] {\includegraphics[width=.98\linewidth]{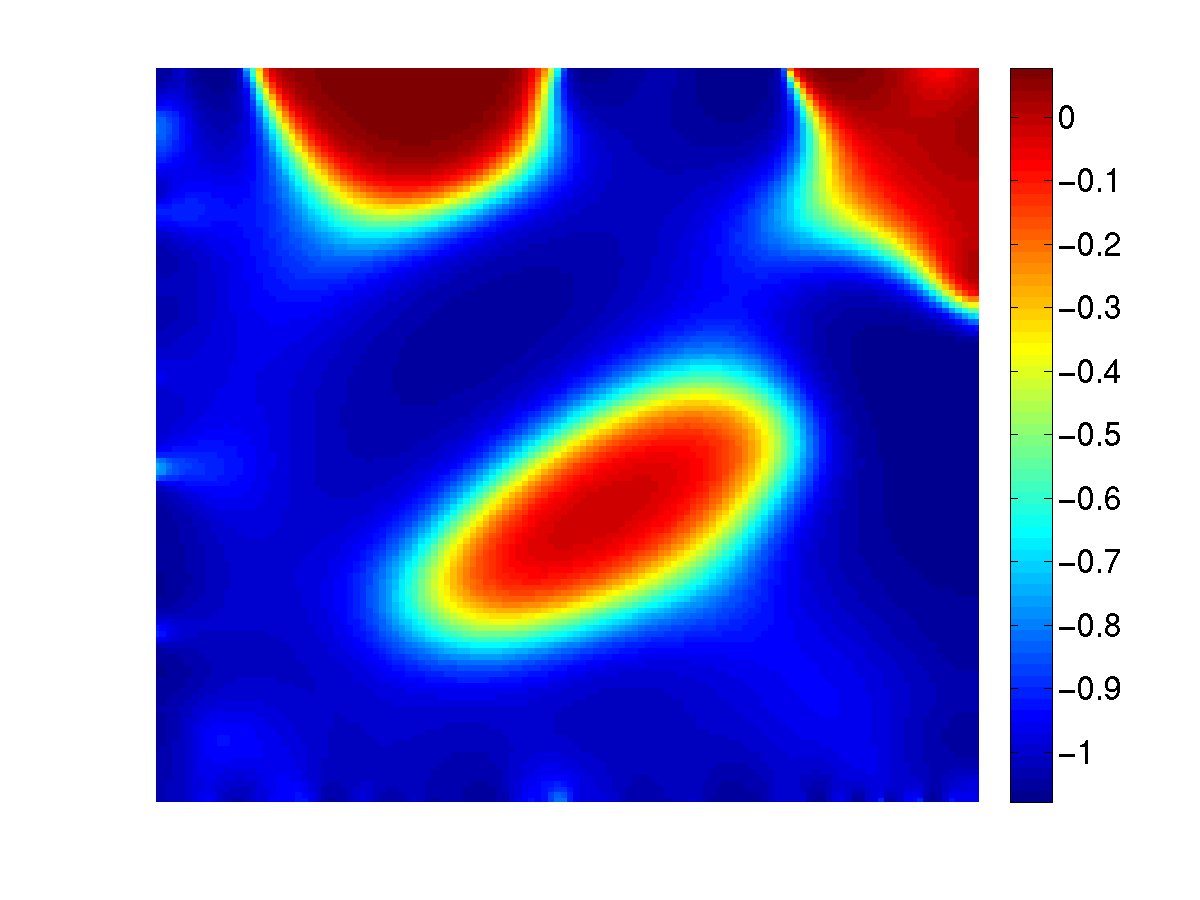}}
    \end{minipage}%\hfill
    \begin{minipage}[htb]{0.3\linewidth}\centering
        \subfigure[Data Completion] {\includegraphics[width=.98\linewidth]{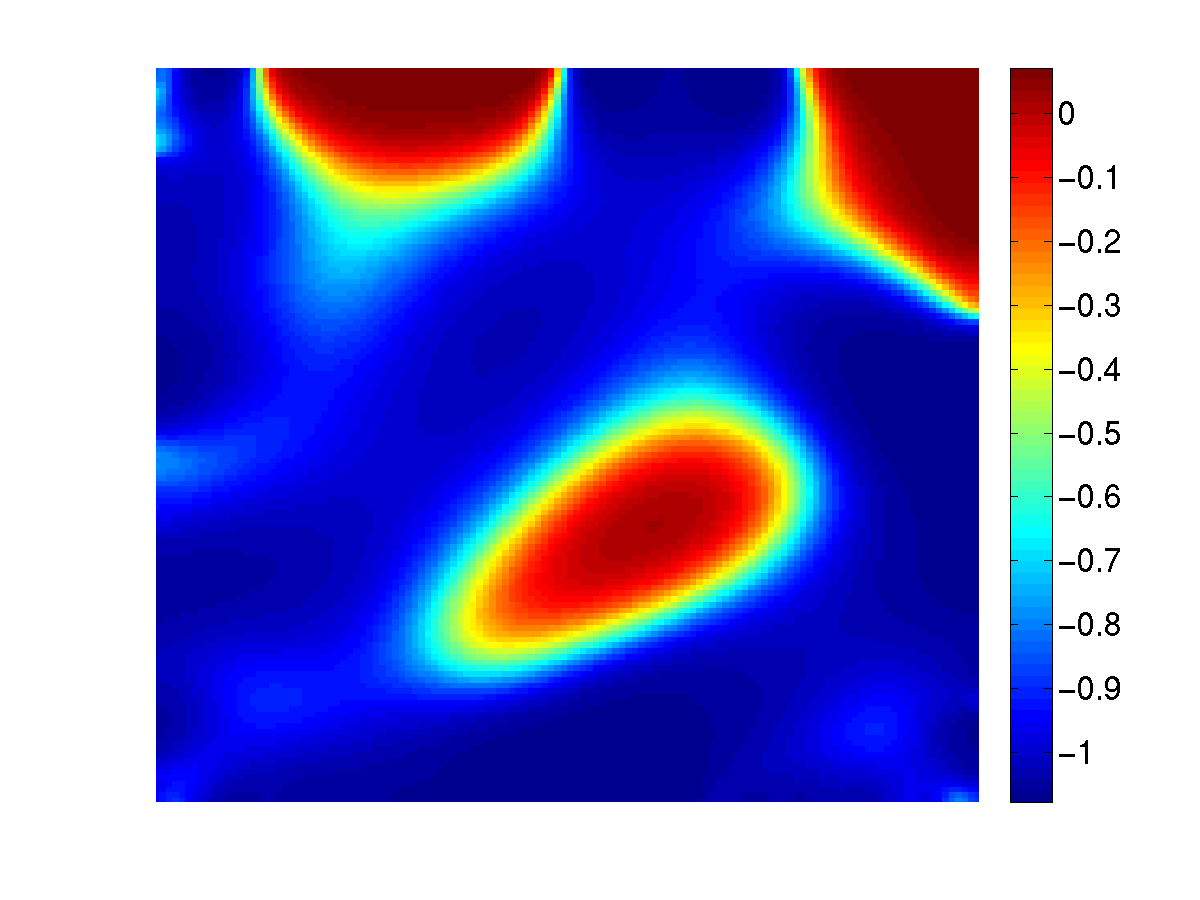}}
    \end{minipage}\hfill
    \caption{Example~\ref{exp02} -- reconstructed log conductivity with $50\%$ data missing and $5\%$ noise.  
		Regularization~\eqref{eqn_sub_02_datainterp} has been used to complete the data.
\label{fig04}}
\end{figure}
Similar observations to those in Example~\ref{exp01} generally apply here as well, despite the smaller
amount of original data.
\end{example}

\begin{example}
\label{exp03}
This is the same as Example~\ref{exp02} in terms of noise and the amount of missing data, except that the 
discontinuities in the conductivity are some distance away from the common measurement domain, 
so we use \eqref{eqn_sub_01_datainterp} to complete the data.
The same algorithm variants as in the previous two examples are compared, thus isolating the effect of
a smoother data approximant.

\begin{figure}[htb]\centering
     \begin{minipage}[htb]{0.3\linewidth}\centering
        \subfigure[True model]		{\includegraphics[width=.98\linewidth]{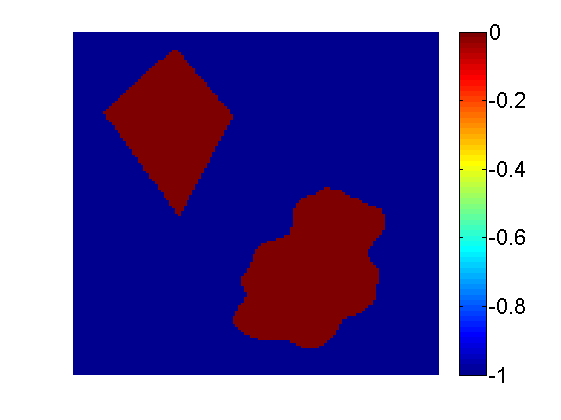}}
    \end{minipage}%\hfill
    \begin{minipage}[htb]{0.3\linewidth}\centering
        \subfigure[Random Subset] {\includegraphics[width=.98\linewidth]{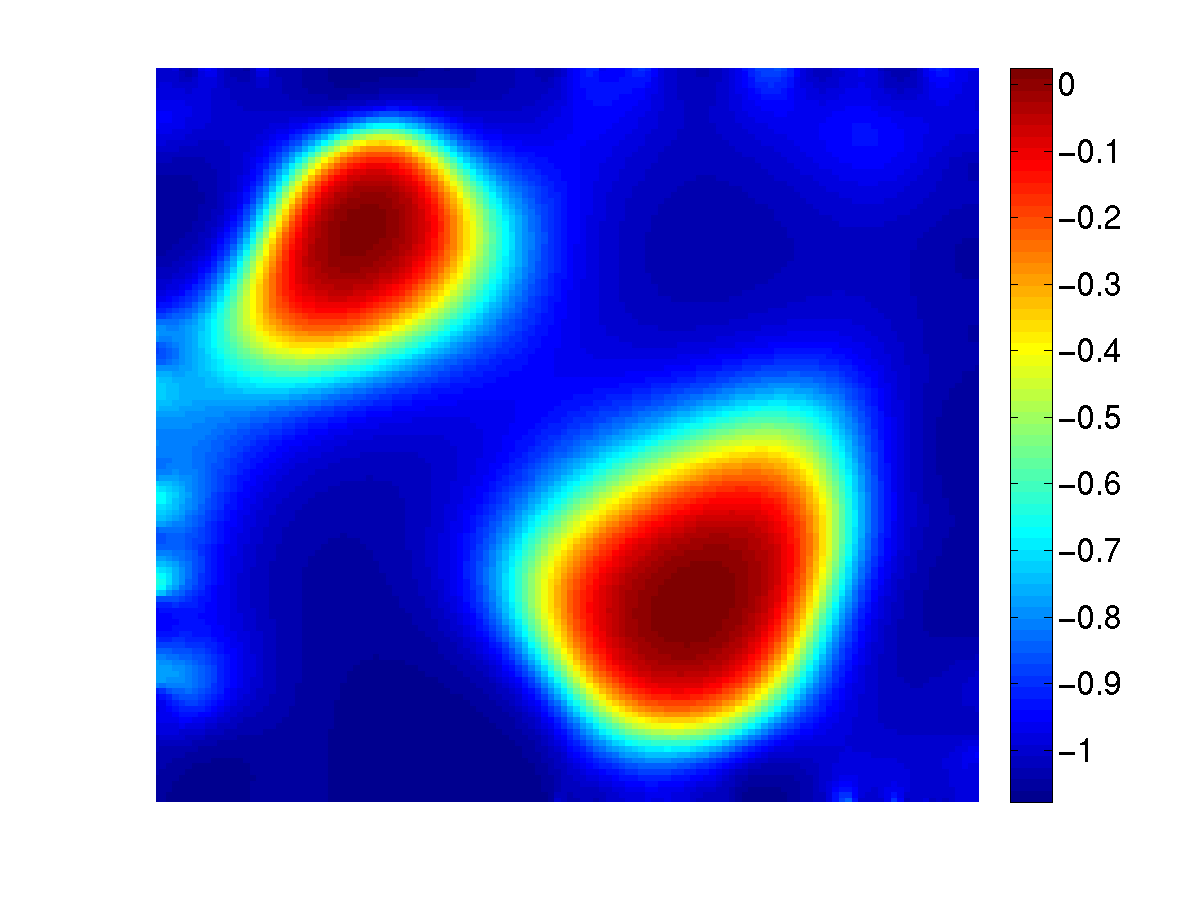}}
    \end{minipage}%\hfill
    \begin{minipage}[htb]{0.3\linewidth}\centering
        \subfigure[Data Completion] {\includegraphics[width=.98\linewidth]{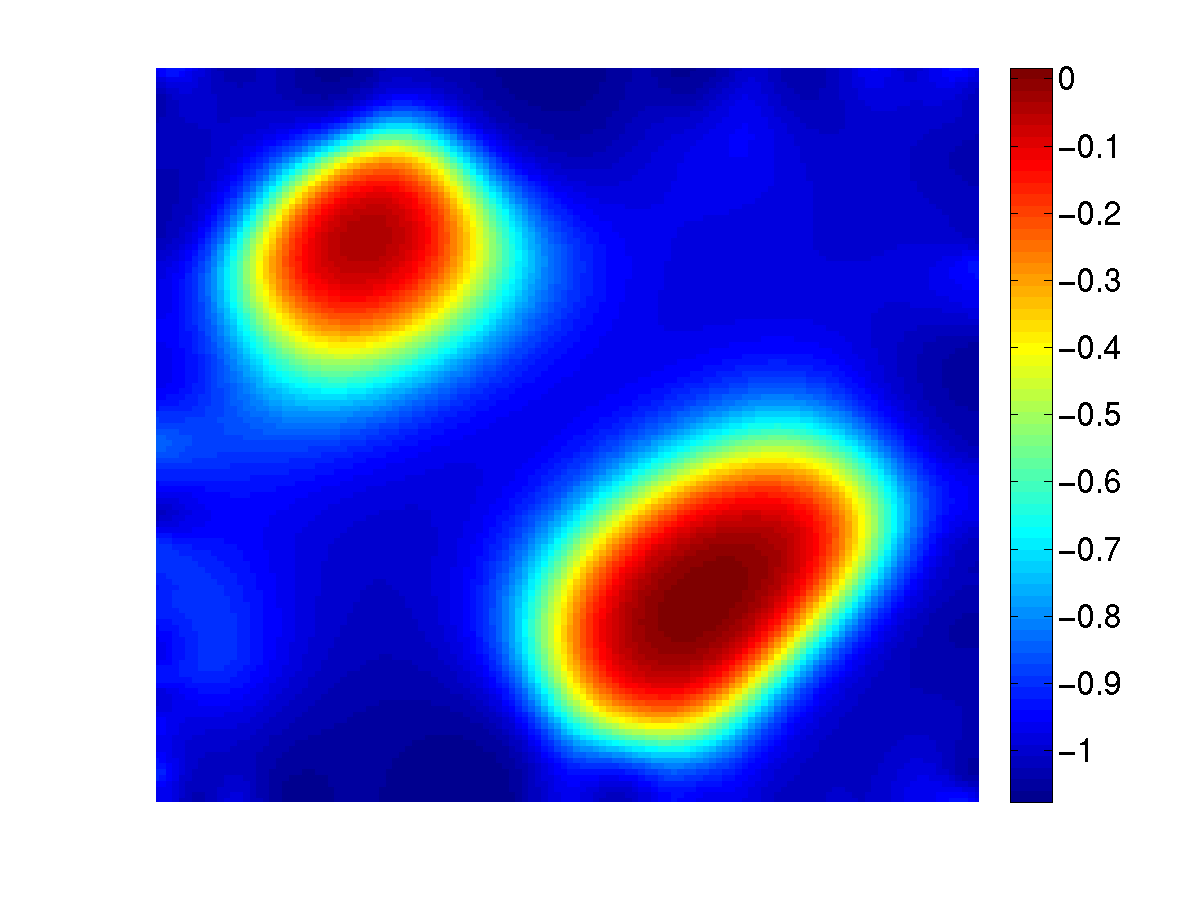}}
    \end{minipage}\hfill
    \caption{Example~\ref{exp03} -- reconstructed log conductivity with $50\%$ data missing and $5\%$ noise.
Regularization~\eqref{eqn_sub_01_datainterp} has been used to complete the data.
\label{fig05}}
\end{figure}

\begin{figure}[htb]
\centering
\subfigure[Example~\ref{exp01}]{
\includegraphics[scale=0.35]{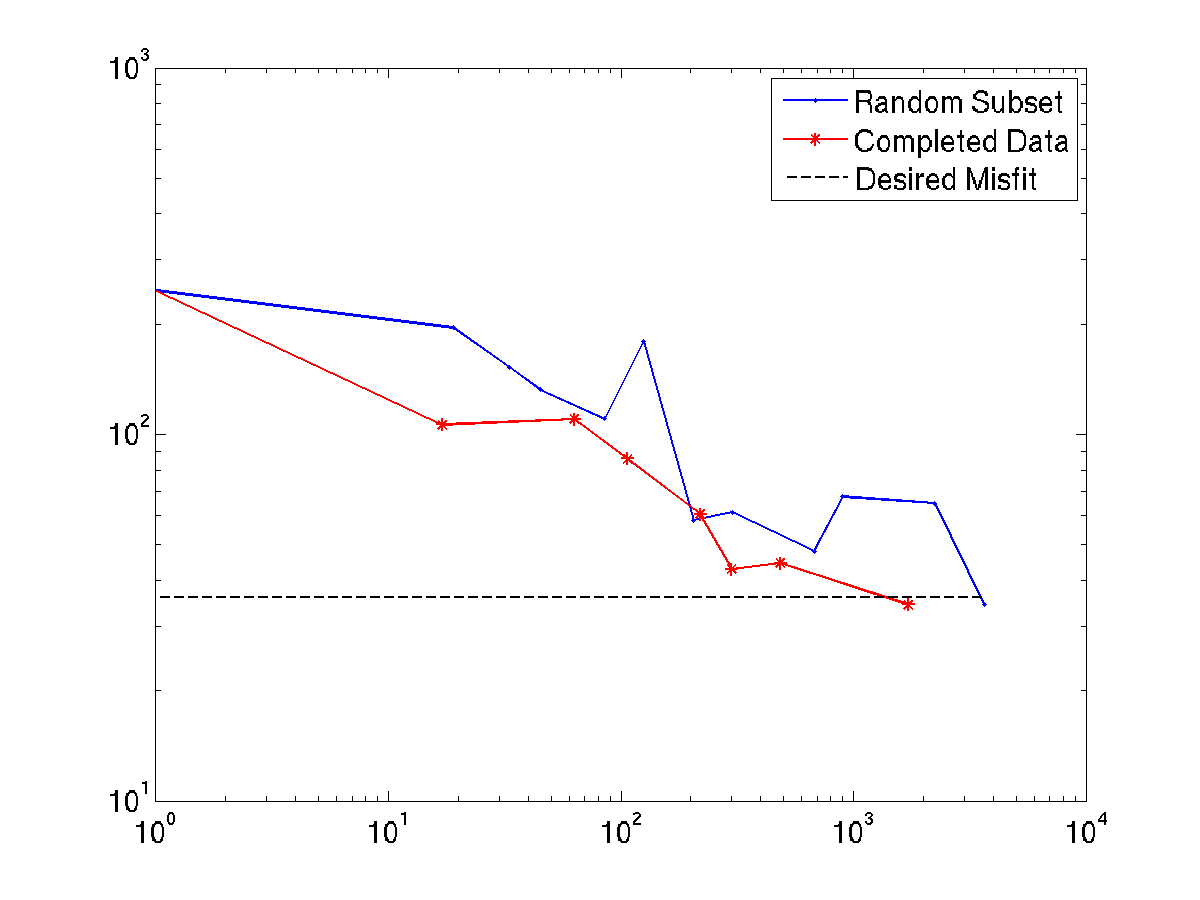}}
\subfigure[Example~\ref{exp02}]{
\includegraphics[scale=0.35]{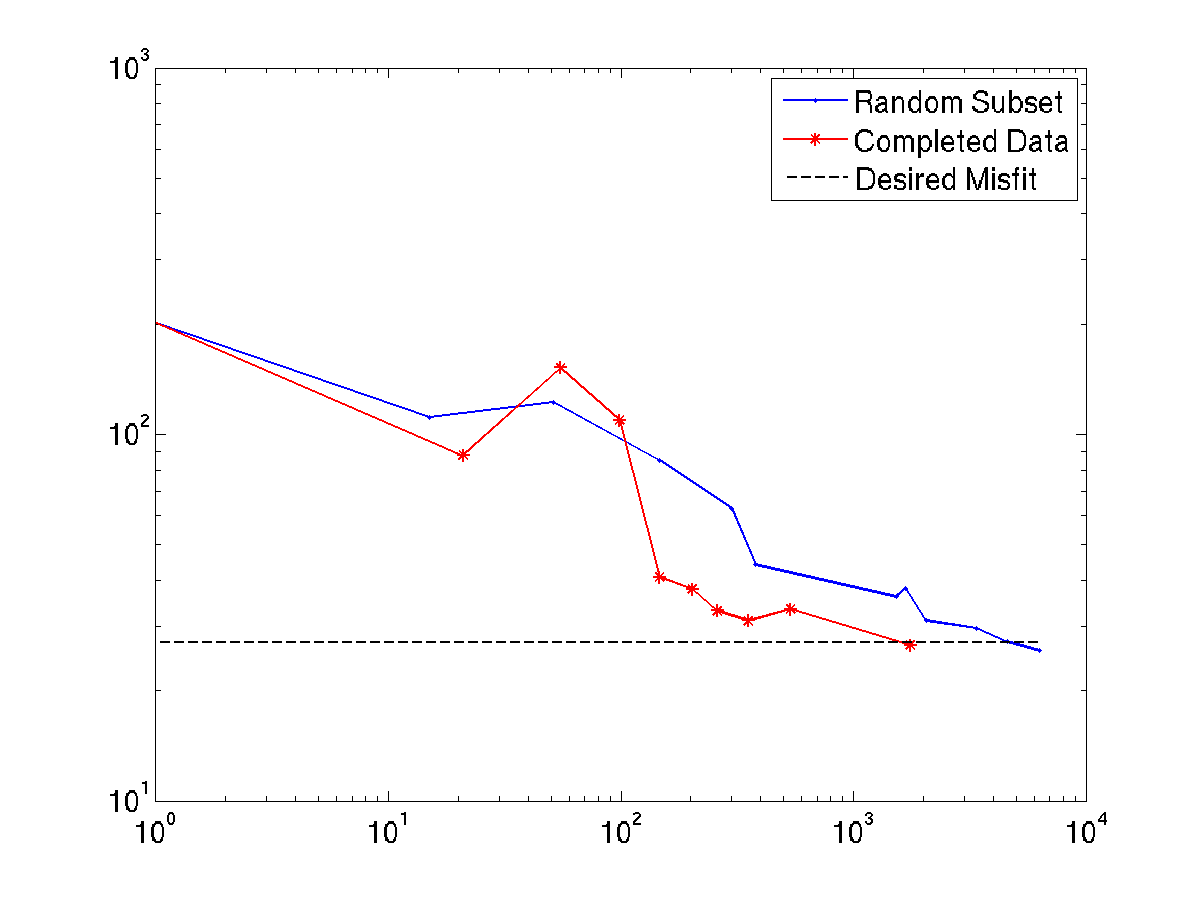}}
\subfigure[Example~\ref{exp03}]{
\includegraphics[scale=0.35]{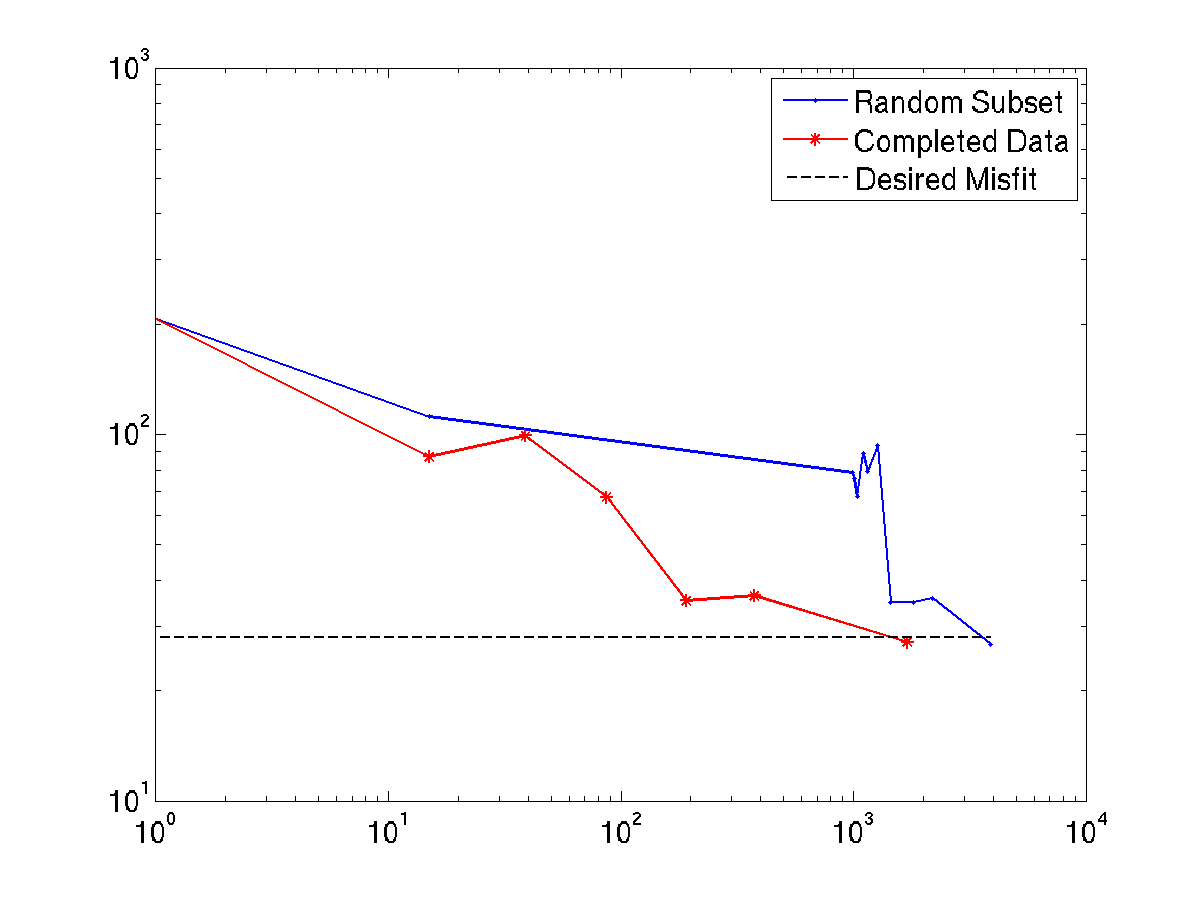}}
\caption{Data misfit vs. PDE count  for Examples~\hyperref[exp01]{1},~\hyperref[exp02]{2} and~\hyperref[exp03]{3}.
\label{fig06}}
\end{figure}
Results are recorded in Figure~\ref{fig05}, the third entry of Table~\ref{table01} and Figure~\ref{fig06}(c).
\end{example}

Figures~\ref{fig03},~\ref{fig04} and~\ref{fig05} in conjunction with Figure~\ref{fig06} as well as Table~\ref{table01}, 
reflect superiority of the SS method combined with data completion over the RS method with the original data. 
From the first three entries of Table~\ref{table01}, we see that the SS reconstruction with 
completed data can be done more efficiently by a factor of more than two. The quality of reconstruction is also very good. 
Note that the graph of the misfit for Data Completion lies mostly under that of Random Subset. 
This means that, given a fixed number of PDE solves, we obtain a lower (thus better) misfit for 
the former than for the latter. 

Next, we consider examples in 3D.

\begin{example}
\label{exp04}
In this example, the discontinuities in the true, piecewise constant conductivity extend all the way 
to the common measurement domain, see Figure~\ref{fig12}.
We therefore use~\eqref{eqn_sub_02_datainterp} to complete the data. 
The target object has the conductivity $\sigma_I = 1$ in a background with conductivity $\sigma_{II} = 0.1$. 
We add $2\%$ noise and knock out $50\%$ of the data. 
Furthermore, we consider the relaxed stopping criterion~\eqref{alg1_stop_b}.
With the original data (hence using RS), the variant (\ref{alg1_data_i},~\ref{alg1_stop_b}) is employed, 
and this is compared against the variant (\ref{alg1_data_ii},~\ref{alg1_stop_b}) with SS applied to the completed data. 
For the latter case, the stopping tolerance is adjusted as discussed in Section~\ref{sec:comp_alg}. 

\begin{figure}[htb]
\centering
\mbox{
\includegraphics[scale=0.3]{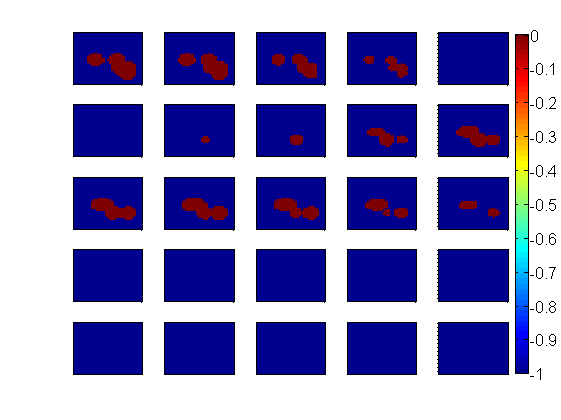}
\includegraphics[scale=0.3]{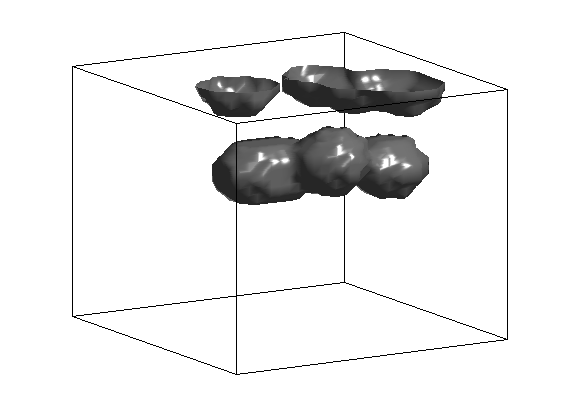}}
\caption{True Model for Example~\ref{exp04}.\label{fig12}}
\end{figure}

\begin{figure*}[htb]\centering
    \begin{minipage}[htb]{0.25\linewidth}\centering
        \subfigure[RS slices]
        {\includegraphics[width=.98\linewidth]{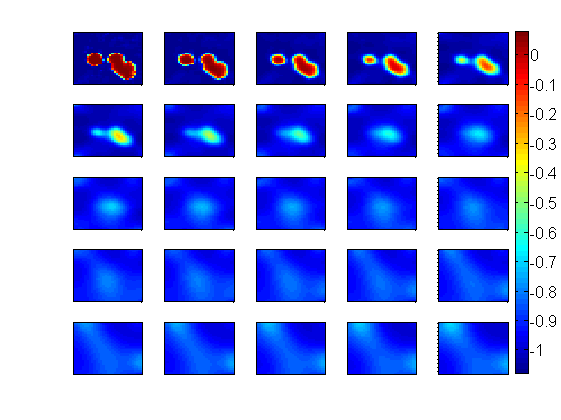}}
    \end{minipage}%\hfill
    \begin{minipage}[htb]{0.2\linewidth}\centering
        \subfigure[3D view]        {\includegraphics[width=.98\linewidth]{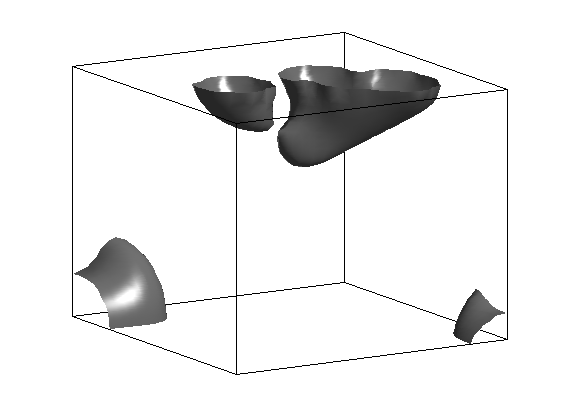}}
    \end{minipage}\hfill
    \begin{minipage}[htb]{0.25\linewidth}\centering
        \subfigure[DC slices]
        {\includegraphics[width=.98\linewidth]{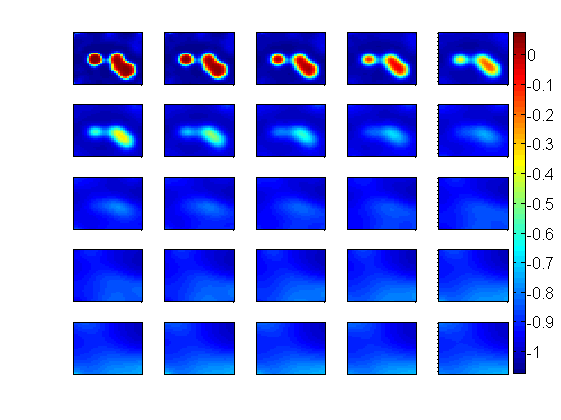}}
    \end{minipage}\hfill
    \begin{minipage}[htb]{0.2\linewidth}\centering
        \subfigure[3D view]
        {\includegraphics[width=.98\linewidth]{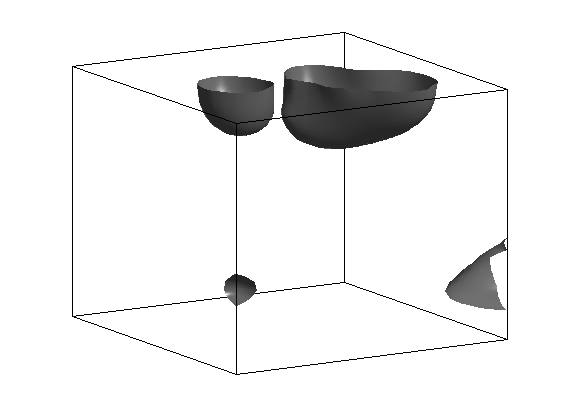}}
    \end{minipage}\hfill
    \caption{Example~\ref{exp04} -- reconstructed log conductivity for the 3D model with (a,b) Random Subset, 
		(c,d) Data Completion for the case of $2\%$ noise and $50\%$ of data missing. 
		Regularization~\eqref{eqn_sub_02_datainterp} has been used to complete the data.
\label{fig13}}
\end{figure*}
Reconstruction results are depicted in Figure~\ref{fig13}, 
and work estimates are gathered in the $4$th entry of Table~\ref{table01}.
It can be seen that the results using data completion, obtained at about $1/7$th the cost, are comparable to those
obtained with RS applied to the original data.
\end{example}

\begin{example}
\label{exp05}

The underlying model in this example is the same as that in Example~\ref{exp04} except that, 
since we intend to plot the misfit on the entire data at every GN iteration, 
we decrease the reconstruction mesh resolution to $17^3$.
Also, $30\%$ of the data requires completion, and we use the level set transfer function~\eqref{2.7} 
to reconstruct the model. With the original data, we use the variant (\ref{alg1_data_i},~\ref{alg1_stop_a}), 
while the variant (\ref{alg1_data_iii},~\ref{alg1_stop_a}) is used with the completed data. 
The reconstruction results are recorded in Figure~\ref{fig15}, and performance indicators appear
in Figure~\ref{fig16} as well as Table~\ref{table01}.

\begin{figure*}[htb]\centering    \begin{minipage}[htb]{0.25\linewidth}\centering
        \subfigure[RS slices]
        {\includegraphics[width=.98\linewidth]{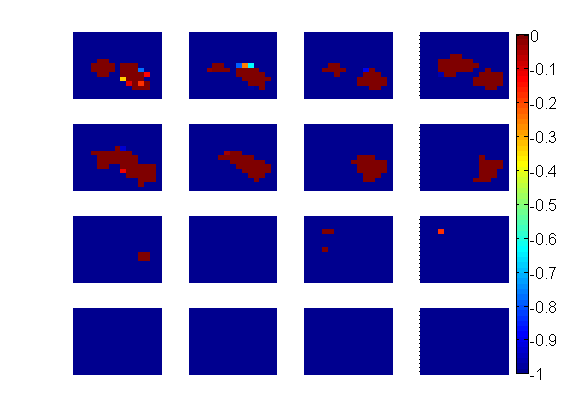}}
    \end{minipage}%\hfill
    \begin{minipage}[htb]{0.2\linewidth}\centering
        \subfigure[3D view]
        {\includegraphics[width=.98\linewidth]{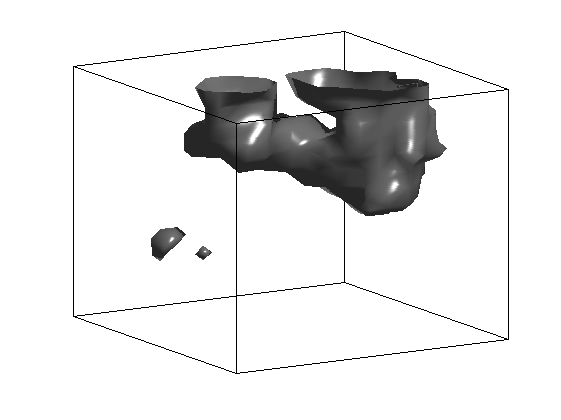}}
    \end{minipage}\hfill
    \begin{minipage}[htb]{0.25\linewidth}\centering
        \subfigure[DC slices]
        {\includegraphics[width=.98\linewidth]{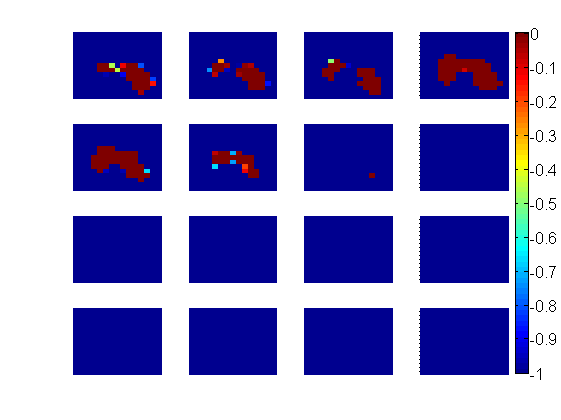}}
    \end{minipage}\hfill
    \begin{minipage}[htb]{0.2\linewidth}\centering
        \subfigure[3D view]
        {\includegraphics[width=.98\linewidth]{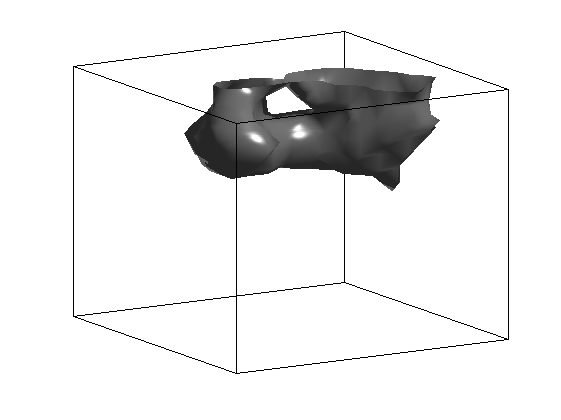}}
    \end{minipage}\hfill
    \caption{Example~\ref{exp05} -- reconstructed log conductivity for the 3D model using 
the level set method with (a,b) Random Subset, 
(c,d) Data Completion for the case of $2\%$ noise and $30\%$ of data missing. 
Regularization~\eqref{eqn_sub_02_datainterp} 
		has been used to complete the data.
\label{fig15}}
\end{figure*}

\begin{figure}[[!ht]
\centering
\includegraphics[scale=0.35]{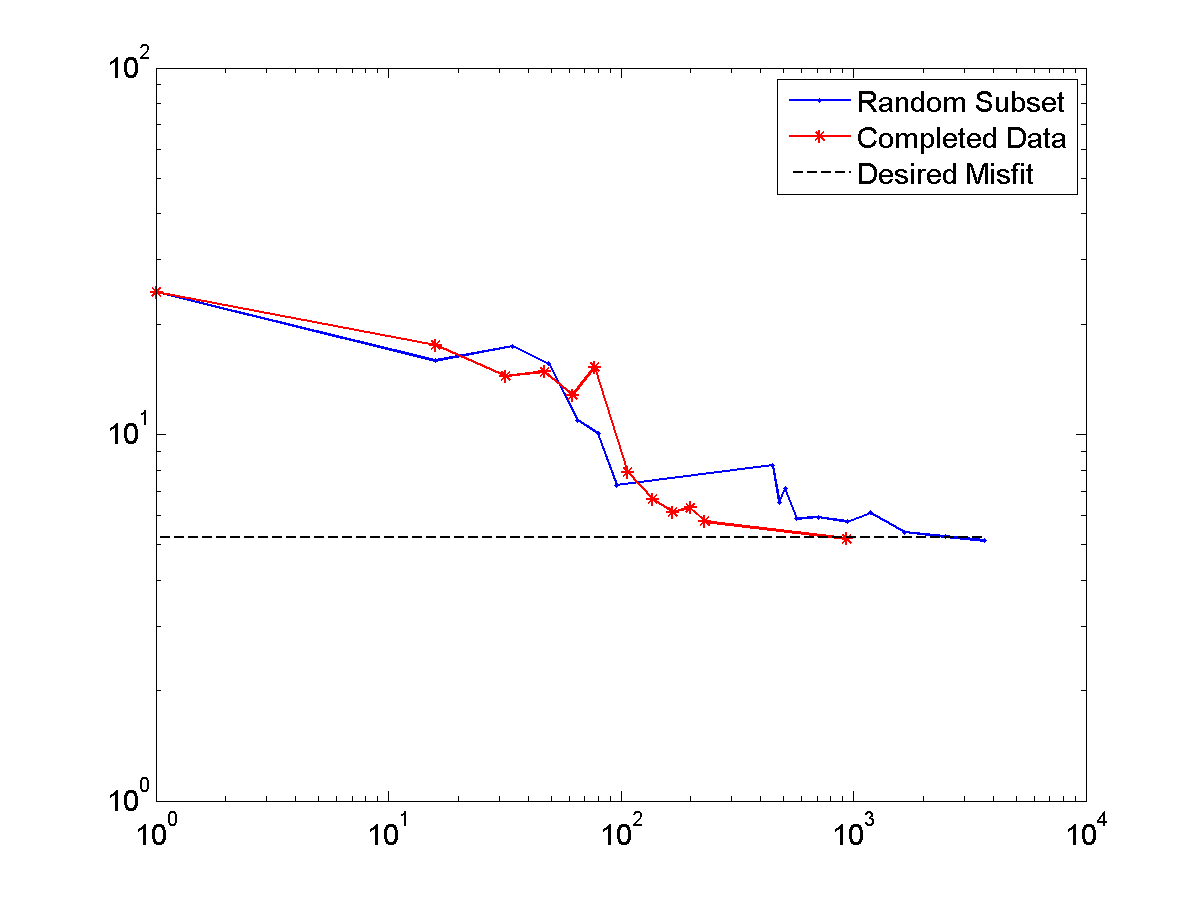}
\caption{Data misfit vs. PDE count  for Example~\ref{exp05}.
\label{fig16}}
\end{figure}

The algorithm proposed here produces a better reconstruction than RS on the original data.
A relative efficiency observation can be made from Table~\ref{table01}, where a factor of roughly $4$ is revealed.
\end{example}

\begin{example}
\label{exp06}
This is exactly the same as Example~\ref{exp04}, except that we use the level set transfer function~\eqref{2.7} 
to reconstruct the model. 
The same variants of Algorithm~\ref{alg1} as in Example~\ref{exp04} are employed.

\begin{figure*}[htb]\centering
    \begin{minipage}[htb]{0.25\linewidth}\centering
        \subfigure[RS slices]
        {\includegraphics[width=.98\linewidth]{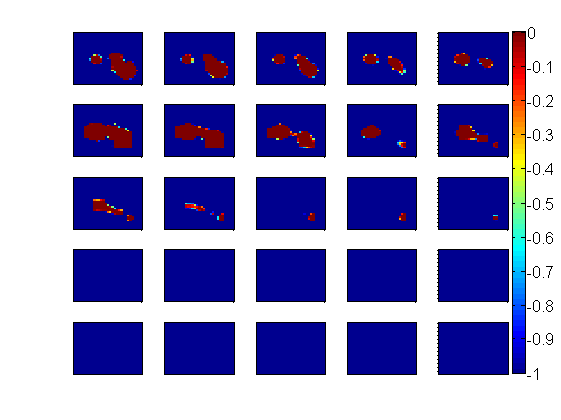}}
    \end{minipage}%\hfill
    \begin{minipage}[htb]{0.2\linewidth}\centering
        \subfigure[3D view]        {\includegraphics[width=.98\linewidth]{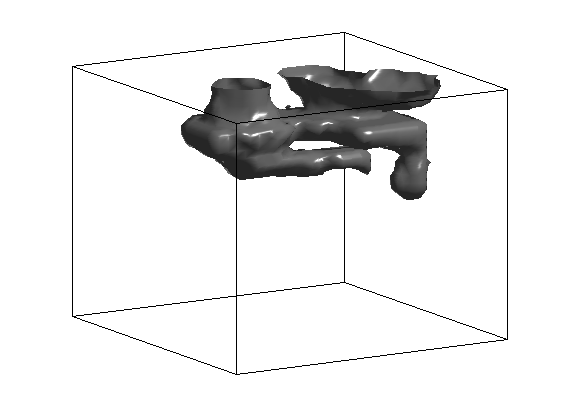}}
    \end{minipage}\hfill
    \begin{minipage}[htb]{0.25\linewidth}\centering
        \subfigure[DC slices]
        {\includegraphics[width=.98\linewidth]{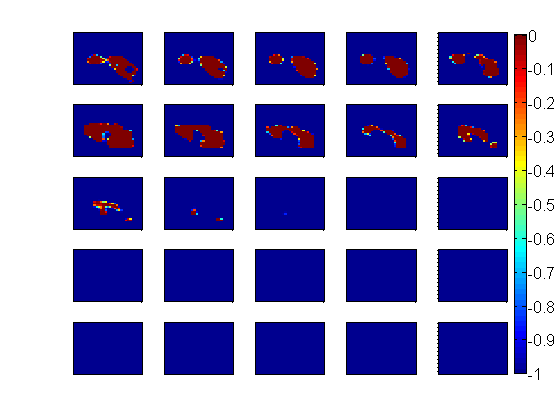}}
    \end{minipage}\hfill
    \begin{minipage}[htb]{0.2\linewidth}\centering
        \subfigure[3D view]
 {\includegraphics[width=.98\linewidth]{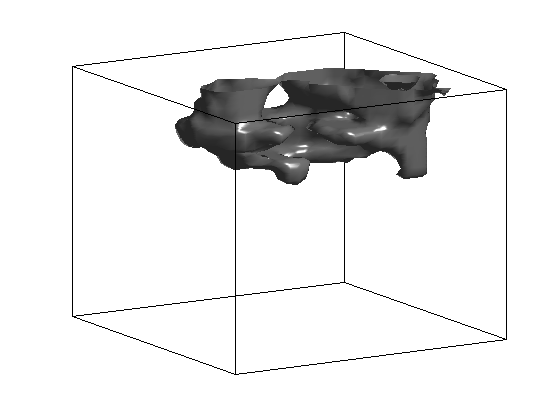}}
    \end{minipage}\hfill
    \caption{Example~\ref{exp06} -- reconstructed log conductivity for the 3D model using 
the level set method with (a,b) Random Subset, 
		(c,d) Data Completion for the case of $2\%$ noise and $50\%$ of data missing. 
		Regularization~\eqref{eqn_sub_02_datainterp} has been used to complete the data.
\label{fig17}}
\end{figure*}

It is evident from Figure~\ref{fig17} that employing the level set formulation allows a significantly 
better quality reconstruction than in Example~\ref{exp04}. 
This is expected, as much stronger assumptions on the true model are utilized. 
It was shown in~\cite{doas,rodoas} that using level set functions can greatly reduce 
the total amount of work, and this is observed here as well.

Whereas in all previous examples convergence of the modified GN iterations from a zero initial
guess was fast and uneventful, typically requiring fewer than 10 iterations,
the level set result of this example depends on $\mm_0$ in a more erratic manner. 
This reflects the underlying uncertainty of the inversion, with the initial guess
$\mm_0$ playing the role of a prior.

\end{example}

 It can be clearly seen from the results of Examples~\ref{exp04},~\ref{exp05} and~\ref{exp06} 
 that Algorithm~\ref{alg1} does a great job recovering the model using the completed data plus the SS method
as compared to RS with the original data. This is so both in terms of total work and the quality of the recovered model.
Note that for all reconstructions, the conductive object placed deeper than the ones closer 
to the surface is not recovered well. 
This is due to the fact that we only measure on the surface and the information coming from this deep conductive 
object is majorized by that coming from the objects closer to the surface. 

\begin{example}
\label{exp07}
In this 3D example, we examine the performance of our data completion approach for more severe cases of missing data. 
For this example, we place a target object of conductivity $\sigma_I = 1$ in a background with conductivity $\sigma_{II} = 0.1$, 
see Figure~\ref{fig18}, and $2\%$ noise is added to the ``exact'' data. 
Then we knock out $70\%$ of the data and use \eqref{eqn_sub_01_datainterp} to complete it. 
The algorithm variants employed are the same as in Examples~\ref{exp04} and ~\ref{exp06}.

Results are gathered in Figures~\ref{fig19} as well as Table~\ref{table01}.
The data completion plus simultaneous sources algorithm again does well, with an efficiency factor $\approx 4$.
\end{example}

\begin{figure}[htb]
\centering
\mbox{
\includegraphics[scale=0.3]{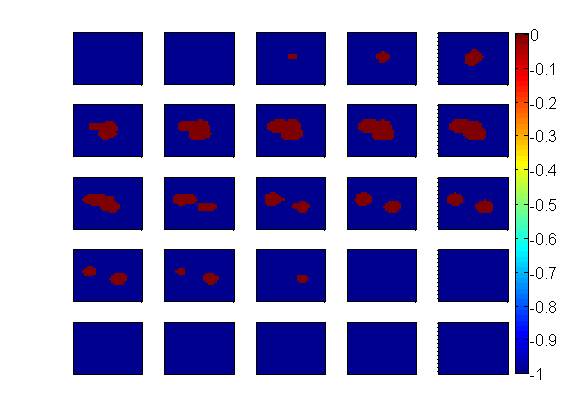}
\includegraphics[scale=0.3]{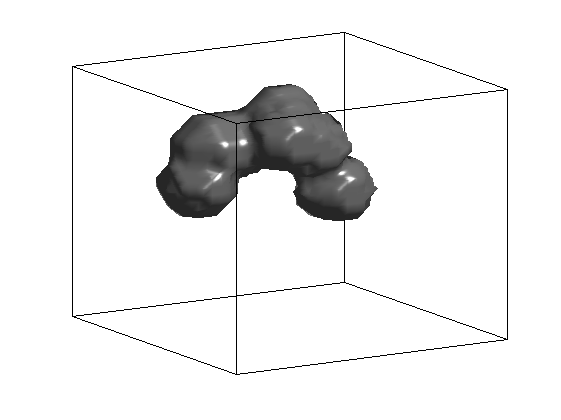}}
\caption{True Model for Example~\ref{exp07}.\label{fig18}}
\end{figure}

\begin{figure*}[htb]\centering
    \begin{minipage}[htb]{0.25\linewidth}\centering
        \subfigure[RS slices]
        {\includegraphics[width=.98\linewidth]{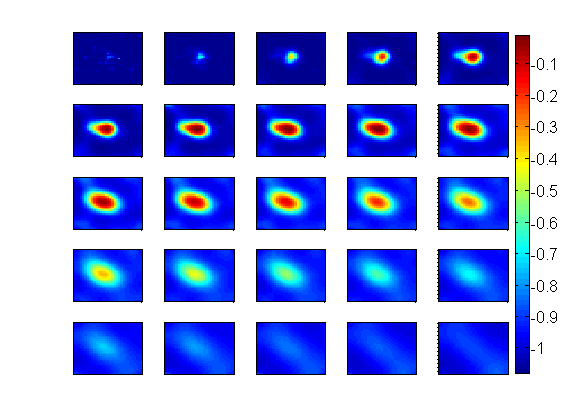}}
    \end{minipage}%\hfill
    \begin{minipage}[htb]{0.2\linewidth}\centering
        \subfigure[3D view]
       {\includegraphics[width=.98\linewidth]{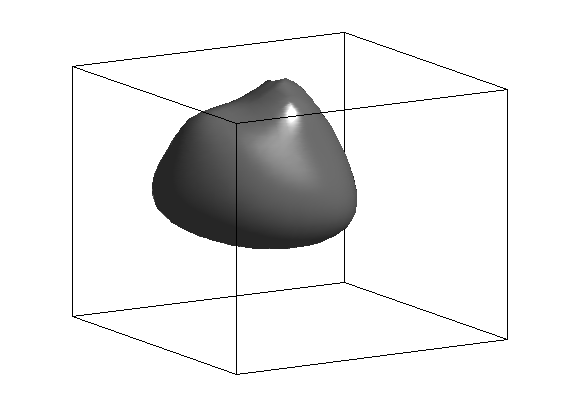}}
    \end{minipage}\hfill
    \begin{minipage}[htb]{0.25\linewidth}\centering
        \subfigure[DC slices]
        {\includegraphics[width=.98\linewidth]{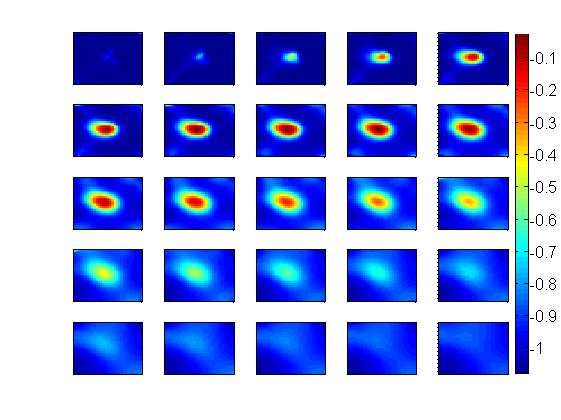}}
    \end{minipage}\hfill
    \begin{minipage}[htb]{0.2\linewidth}\centering
        \subfigure[3D view]
        {\includegraphics[width=.98\linewidth]{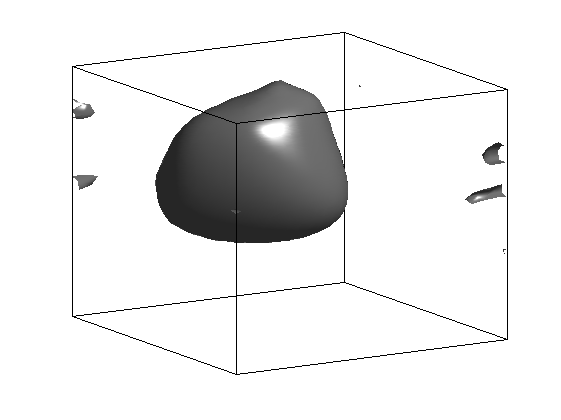}}
    \end{minipage}\hfill
    \caption{Example~\ref{exp07} -- reconstructed log conductivity for the 3D model with (a,b) Random Subset, 
		(c,d) Data Completion for the case of $2\%$ noise and $70\%$ data missing. Regularization~\eqref{eqn_sub_01_datainterp} has been used to complete the data.
\label{fig19}}
\end{figure*}

\section{Conclusions and further comments}
\label{sec:conclusions}

This paper is a sequel to~\cite{rodoas} in which we studied the case where sources share the same receivers.
Here we have focused on the very practical case where sources do not share the same receivers 
yet are distributed in a particular manner,
and have proposed a new approach based on 
appropriately regularized data completion.
Our data completion methods are motivated by theory in Sobolev spaces regarding the properties 
of weak solutions along the domain boundary. 
The resulting completed data allows an efficient use of the methods developed in~\cite{rodoas}
as well as utilization of a relaxed stopping criterion.
Our approach shows great success in cases of moderate data completion, say up to 60-70\%. 
In such cases we have demonstrated that, utilizing some variant of Algorithm~\ref{alg1}, an execution speedup factor of at least 2 
and often much more can be achieved while obtaining excellent reconstructions.

It needs to be emphasized that a blind employment of some interpolation/approximation method 
would not take into account available a priori information about the sought signal.
In contrast, the method developed in this paper, while being very simple, 
is in fact built upon such a priori information, and is theoretically justified.

Note that with the methods of Section~\ref{sec:datainterp} we have also replaced the
original data with new, approximate data. Alternatively we could
keep the original data, and just add the missing data sampled from $v_i$
at appropriate locations. 
The potential advantage of doing this is that fewer changes are made to the original problem,
so it would seem plausible that the data extension will produce results
that are close to the more expensive inversion without using the simultanous
sources method, at least when there are only a few missing receivers. 
However, we found in practice that this method
yields similar or worse reconstructions for moderate or large amounts of missing
data as compared to the methods of Section~\ref{sec:datainterp}.

For severe cases of missing data, say $80\%$ or more, 
we do not recommend data completion in the present context
as a safe approach. With so much completion the bias in the completed field could 
overwhelm the given observed data, and the recovered model may not be correct. 
In such cases, one can use the RS method applied to the original data.
A good initial guess for this method may still be obtained with the SS
method applied to the completed data.
Thus, one can always start with the most daring variant $(ii,b)$ of Algorithm~\ref{alg1},
and add a more conservative run of variant $(i,b)$ on top if necessary. 

If the forward problem is very diffusive and has a strong smoothing effect, 
as is the case for the DC-resistivity and EIT
problems, then data completion can be attempted using a (hopefully) good guess of 
the sought model $\mm$ by solving the forward problem
and evaluating the solution wherever necessary~\cite{hach12}.
The rationale here is that even relatively large changes in $m(\xx)$ produce only small 
changes in the fields $u_i(\xx)$.
However, such a prior might prove dominant, hence risky,
and the data produced in this way, unlike the original data, 
no longer have natural high frequency noise components.
Indeed, a potential advantage of this approach is in using the difference between 
the original measured data and 
the calculated prior field at the same locations for
estimating the noise level $\epsilon$
for a subsequent application of the Morozov discrepancy principle~\cite{vogelbook,ehn1}.

In this paper we have focused on data completion, 
using whenever possible the same computational setting as in~\cite{rodoas}, which is our base reference. 
Other approaches to reduce the overall computational costs are certainly possible. 
These include adapting the number of inner PCG iterations in the modified GN outer iteration 
(see~\cite{doas3}) and adaptive gridding for $\mm (\xx)$ 
(see, e.g., \cite{haheas} and references therein). 
Such techniques are essentially independent of the focus here. 
At the same time, they can be incorporated or fused together with our stochastic algorithms,
further improving efficiency: 
effective ways for doing this form a topic for future research.   

The specific data completion techniques proposed in this paper have been justified and used 
in our model DC resistivity problem. 
However, the overall idea can be extended to other PDE based inverse problems as well 
by studying the properties of the solution of the forward problem. 
One first needs to see what the PDE solutions are expected to behave like on the measurement domain, 
for example on a portion of the boundary, and then imposing this prior 
knowledge in the form of an appropriate regularizer in the data completion formulation. 
Following that, the rest can be similar to our approach here. 
Investigating such extensions to other PDE models is a subject for future studies.

\section*{Acknowledgments}
\label{acknowledgment}
The authors would like to thank Drs. Adriano De Cezaro and Eldad Haber for several fruitful discussions. 

%%%%%%%%%%%%%%%%%%%%%%%%%%%%%%%%%%%%%%%%%%%%%%%%%%%%%%%%%%%%%%%%%%%%%%%%%%%%%

%\bibliographystyle{plain}
%\bibliography{biblio}

\begin{thebibliography}{10}

\bibitem{achlioptas}
D.~Achlioptas.
\newblock Database-friendly random projections.
\newblock In {\em ACM SIGMOD-SIGACT-SIGART Symposium on Principles of Database
  Systems, PODS Õ01}, volume~20, pages 274--281, 2001.

\bibitem{alve}
A.~Alessandrini and S.~Vessella.
\newblock Lipschitz stability for the inverse conductivity problem.
\newblock {\em Adv. Appl. Math.}, 35:207--241, 2005.

\bibitem{aspa}
K.~Astala and L.~Paivarinta.
\newblock Calderon inverse conductivity problem in the plane.
\newblock {\em Annals of Math.}, 163:265--299, 2006.

\bibitem{avto}
H.~Avron and S.~Toledo.
\newblock Randomized algorithms for estimating the trace of an implicit
  symmetric positive semi-definite matrix.
\newblock {\em JACM}, 58(2), 2011.
\newblock Article 8.

\bibitem{bbp}
L.~Borcea, J.~G. Berryman, and G.~C. Papanicolaou.
\newblock High-contrast impedance tomography.
\newblock {\em Inverse Problems}, 12:835--858, 1996.

\bibitem{bcnn}
R.~Byrd, G.~Chin, W.~Neveitt, and J.~Nocedal.
\newblock On the use of stochastic hessian information in optimization methods
  for machine learning.
\newblock {\em SIAM J. Optimization}, 21(3):977--995, 2011.

\bibitem{cin}
M.~Cheney, D.~Isaacson, and J.~C. Newell.
\newblock Electrical impedance tomography.
\newblock {\em SIAM Review}, 41:85--101, 1999.

\bibitem{doas}
K.~van~den Doel and U.~Ascher.
\newblock On level set regularization for highly ill-posed distributed
  parameter estimation problems.
\newblock {\em J. Comp. Phys.}, 216:707--723, 2006.

\bibitem{doas3}
K.~van~den Doel and U.~Ascher.
\newblock Adaptive and stochastic algorithms for {EIT} and {DC} resistivity
  problems with piecewise constant solutions and many measurements.
\newblock {\em SIAM J. Scient. Comput.}, 34:DOI: 10.1137/110826692, 2012.

\bibitem{doasha}
K.~van~den Doel, U.~Ascher, and E.~Haber.
\newblock The lost honour of $\ell_2$-based regularization.
\newblock {\em Radon Series in Computational and Applied Math}, 2013.
\newblock M. Cullen, M. Freitag, S. Kindermann and R. Scheinchl (Eds).

\bibitem{dmr}
O.~Dorn, E.~L. Miller, and C.~M. Rappaport.
\newblock A shape reconstruction method for electromagnetic tomography using
  adjoint fields and level sets.
\newblock {\em Inverse Problems}, 16, 2000.
\newblock 1119-1156.

\bibitem{elad}
M.~Elad.
\newblock {\em Sparse and Redundant Representations: From Theory to
  Applications in Signal and Image Processing}.
\newblock Springer, 2010.

\bibitem{ehn1}
H.~W. Engl, M.~Hanke, and A.~Neubauer.
\newblock {\em Regularization of Inverse Problems}.
\newblock Kluwer, Dordrecht, 1996.

\bibitem{fichtner}
A.~Fichtner.
\newblock {\em Full Seismic Waveform Modeling and Inversion}.
\newblock Springer, 2011.

\bibitem{frsc11}
M.~Friedlander and M.~Schmidt.
\newblock Hybrid deterministic-stochastic methods for data fitting.
\newblock {\em SIAM J. Scient. Comput.}, 34(3), 2012.

\bibitem{gkljskm12}
M.~Gehrea, T.~Kluth, A.~Lipponen, B.~Jin, A.~Seppaenenb, J.~Kaipio, and
  P.~Maass.
\newblock Sparsity reconstruction in electrical impedance tomography: An
  experimental evaluation.
\newblock {\em J. Comput. Appl. Math.}, 236:2126--2136, 2012.

\bibitem{geisser}
S.~Geisser.
\newblock {\em Predictive Inference}.
\newblock New York: Chapman and Hall, 1993.

\bibitem{haasol}
E.~Haber, U.~Ascher, and D.~Oldenburg.
\newblock Inversion of 3{D} electromagnetic data in frequency and time domain
  using an inexact all-at-once approach.
\newblock {\em Geophysics}, 69:1216--1228, 2004.

\bibitem{hach12}
E.~Haber and M.~Chung.
\newblock Simultaneous source for non-uniform data variance and missing data.
\newblock 2012.
\newblock http://arxiv.org/abs/1404.5254.

\bibitem{HaberChungHermann2010}
E.~Haber, M.~Chung, and F.~Herrmann.
\newblock An effective method for parameter estimation with {PDE} constraints
  with multiple right-hand sides.
\newblock {\em SIAM J. Optimization}, 22:739--757, 2012.

\bibitem{haheas}
E.~Haber, S.~Heldmann, and U.~Ascher.
\newblock Adaptive finite volume method for distributed non-smooth parameter
  identification.
\newblock {\em Inverse Problems}, 23:1659--1676, 2007.

\bibitem{hel}
F.~Herrmann, Y.~Erlangga, and T.~Lin.
\newblock Compressive simultaneous full-waveform simulation.
\newblock {\em Geophysics}, 74:A35, 2009.

\bibitem{isakov}
V.~Isakov.
\newblock {\em Inverse Problems for Partial Differential Equations}.
\newblock Springer, 2006.

\bibitem{jima12}
B.~Jin and P.~Maass.
\newblock An analysis of electrical impedance tomography with applications to
  tikhonov regularization.
\newblock {\em ESAIM: Control, Optimisation and Calculus of Variation},
  18(4):1027--1048, 2012.

\bibitem{jlns}
A.~Juditsky, G.~Lan, A.~Nemirovski, and A.~Shapiro.
\newblock Stochastic approximation approach to stochastic programming.
\newblock {\em SIAM J. Optimization}, 19(4):1574--1609, 2009.

\bibitem{LiVog}
Y.~Li and M.~Vogelius.
\newblock Gradient estimates for solutions to divergence form elliptic
  equations with discontinuous coefficients.
\newblock {\em Arch. Rational Mech. Anal}, 153:91--151, 2000.

\bibitem{na}
G.~A. Newman and D.~L. Alumbaugh.
\newblock Frequency-domain modelling of airborne electromagnetic responses
  using staggered finite differences.
\newblock {\em Geophys. Prospecting}, 43:1021--1042, 1995.

\bibitem{papauh}
L.~Paivarinta, A.~Panchenko, and G.~Uhlmann.
\newblock Complex geometrical optics solutions for {L}ipschitz conductivities.
\newblock {\em Rev. Mat. Iberoamericana}, 19:57--72, 2003.

\bibitem{pihakn}
A.~Pidlisecky, E.~Haber, and R.~Knight.
\newblock {RESINVM3D: A MATLAB 3D Resistivity Inversion Package}.
\newblock {\em Geophysics}, 72(2):H1--H10, 2007.

\bibitem{rnkkda}
J.~Rohmberg, R.~Neelamani, C.~Krohn, J.~Krebs, M.~Deffenbaugh, and J.~Anderson.
\newblock Efficient seismic forward modeling and acquisition using simultaneous
  random sources and sparsity.
\newblock {\em Geophysics}, 75(6):WB15--WB27, 2010.

\bibitem{roas}
F.~Roosta-Khorasani and U.~Ascher.
\newblock Improved bounds on sample size for implicit matrix trace estimators.
\newblock {\em J. Found. of Comp. Math.}, 2014.
\newblock DOI: 10.1007/s10208-014-9220-1.

\bibitem{rodoas}
F.~Roosta-Khorasani, K.~van~den Doel, and U.~Ascher.
\newblock Stochastic algorithms for inverse problems involving pdes and many
  measurements.
\newblock {\em SIAM J. SISC}, 2013.
\newblock accepted.

\bibitem{sdr}
A.~Shapiro, D.~Dentcheva, and D.~Ruszczynski.
\newblock {\em Lectures on Stochastic Programming: Modeling and Theory}.
\newblock Piladelphia: SIAM, 2009.

\bibitem{Shkoller}
S.~Shkoller.
\newblock {\em Lecture Notes on Partial Differential Equations}.
\newblock Department of Mathematics, University of California, Davis, June
  2012.

\bibitem{smvoz}
N.~C. Smith and K.~Vozoff.
\newblock Two dimensional {DC} resistivity inversion for dipole dipole data.
\newblock {\em IEEE Trans. on geoscience and remote sensing}, GE 22:21--28,
  1984.

\bibitem{learhe}
T.~van Leeuwen, S.~Aravkin, and F.~Herrmann.
\newblock Seismic waveform inversion by stochastic optimization.
\newblock {\em Hindawi Intl. J. Geophysics}, 2011:doi:10.1155/2011/689041,
  2012.

\bibitem{vogelbook}
C.~Vogel.
\newblock {\em Computational methods for inverse problem}.
\newblock SIAM, Philadelphia, 2002.

\bibitem{yori}
J.~Young and D.~Ridzal.
\newblock An application of random projection to parameter estimation in
  partial differential equations.
\newblock {\em SIAM J. SISC}, 34:A2344--A2365, 2012.

\end{thebibliography}

\end{document}